\DeclareMathOperator{\Aut}{Aut}
\DeclareMathOperator{\GL}{GL}
\DeclareMathOperator{\PSL}{PSL}
\DeclareMathOperator{\Mat}{Mat}
\newtheorem{theorem}{Theorem}[section]
\newtheorem{lemma}[theorem]{Lemma}
\newtheorem{corollary}[theorem]{Corollary}
\newtheorem{proposition}[theorem]{Proposition}
\theoremstyle{definition}
\newtheorem{remark}[theorem]{Remark}
\newtheorem{remarks}[theorem]{Remarks}
\newtheorem{definition}[theorem]{Definition}
\newtheorem{example}[theorem]{Example}
\newtheorem*{question*}{Question}
\newtheorem*{remark*}{Remark}
\title{Complete reducibility: Variations on a theme of Serre}
\author{Maike Gruchot, Alastair Litterick, Gerhard R\"{o}hrle}
\address{Maike Gruchot: Lehrstuhl f\"{u}r Algebra und Zahlentheorie, RWTH Aachen University, Pontdriesch 14/16, D-52062 Aachen, Germany}
\address{Alastair Litterick: Department of Mathematical Sciences, University of Essex, Wivenhoe Park, Colchester, Essex CO4 3SQ, United Kingdom}
\address{Gerhard R\"{o}hrle: Fakult\"{a}t f\"{u}r Mathematik, Ruhr-Universit\"{a}t Bochum, Universit\"{a}tsstra{\ss}e 150, D-44780 Bochum, Germany}
\email{maike.gruchot@rwth-aachen.de}
\email{a.litterick@essex.ac.uk}
\email{gerhard.roehrle@rub.de}
\subjclass[2010]{51E24, 20E42, 20G15, 20G40, 14L24}
\keywords{Complete reducibility in spherical buildings, $G$-complete reducibility}
\begin{document}

\begin{abstract}
In this note, we unify and extend various concepts in the area of $G$-complete reducibility, where $G$ is a reductive algebraic group. By results of Serre and Bate--Martin--R\"{o}hrle, the usual notion of $G$-complete reducibility can be re-framed as a property of an action of a group on the spherical building of the identity component of $G$. We show that other variations of this notion, such as relative complete reducibility and $\sigma$-complete reducibility, can also be viewed as special cases of this building-theoretic definition, and hence a number of results from these areas are special cases of more general properties.
\end{abstract}

\maketitle

\section{Introduction}

This paper concerns the notion of complete reducibility in the theory of reductive algebraic groups. Let $G$ be a reductive algebraic group, defined over an algebraically closed field, and to begin let us assume that $G$ is connected. Following Serre \cite{Serre1998}, a subgroup $H$ of $G$ is called \emph{$G$-completely reducible} if, whenever $H$ is contained in a parabolic subgroup $P$ of $G$, it is contained in a Levi subgroup of $P$. This fundamental notion directly generalises the concept of a completely reducible $H$-module (the case that $G$ is a general linear group), and has proved highly fruitful in streamlining results in the theory of algebraic groups. As well as having direct applications to the subgroup structure of $G$, see for instance \cite{Litterick2018,Stewart2013}, it relates closely to the semisimplicity of subgroups on $G$-modules \cite[\S 5.2]{Serre2003-2004}, and provides connections with the related notion of \emph{strongly reductive} subgroups and geometric invariant theory \cite{Richardson1982,Bate2005}.

Because of these connections, complete reducibility extends naturally to non-connected reductive groups $G$ \cite[\S 6]{Bate2005}, replacing parabolic subgroups and Levi subgroups with so-called R-parabolic subgroups and R-Levi subgroups (see Section~\ref{sec:prelims} for definitions). By placing restrictions on the parabolics and Levis under consideration, we also obtain generalisations which we shall discuss shortly. The purpose of this note is to show that these generalisations are all special cases of Serre's original building-theoretic concept, and that results in each area can therefore be viewed as special cases of more general phenomena.

Let $X$ be an arbitrary spherical building, \cite{Tits74}. A subset $Y$ of
$X$ is said to be \emph{convex} if whenever two points of  
$Y$ are not opposite in $X$, then  $Y$  
contains the unique geodesic joining these points.
The vertices of $X$ can be labelled in an 
essentially unique way via an equivalence
relation, cf.~\cite[\S 2.1.2]{Serre2003-2004}; 
the \emph{type} of a vertex is its label.
An automorphism $\alpha$ of $X$ is said to be \emph{type-preserving}
if $x$ and $\alpha(x)$ have the same type for all vertices $x$ of $X$.

Now let $X = X(G^{\circ})$ be the spherical Tits building of the identity component $G^{\circ}$ of 
a reductive algebraic group $G$, \cite{Tits74}.
Recall that the simplices in $X$ correspond to
the parabolic subgroups of $G^{\circ}$ and the vertices
of $X$ correspond to the maximal proper parabolic subgroups  of $G^{\circ}$, see \cite[\S 3.1]{Serre2003-2004}.

By an action of a group $\Gamma$ on a spherical building $X$ we mean an action on $X$ by simplicial building automorphisms.
In that case let $X^\Gamma$ be the fixed point subset of the action of $\Gamma$, i.e.\ 
the subset of all $\Gamma$-stable (thus $\Gamma$-fixed) simplices in $X$ and note that 
this subset is always convex. 
If the action of  $\Gamma$ is type-preserving then 
$X^\Gamma$ is a subcomplex, else it is only a subcomplex of the barycentric subdivision of $X$, 
\cite[\S 2.3.1]{Serre2003-2004}.

By an action of a group $\Gamma$ on the reductive group $G$, we mean an action such that the induced action on 
the spherical building of $G^{\circ}$ is simplicial.
For $G$ connected, abstract automorphisms of $G$ do give rise to such automorphisms on the building of $G$,
see \cite{Steinberg74}.

Our starting point is the following fundamental definition of Serre.

\begin{definition}[{\cite[\S 2]{Serre2003-2004}}] \label{def:cr-action} 
	Let $X$ be a spherical building. 
	A convex subset $Y$ is 
	\emph{$X$-completely reducible} (\emph{$X$-cr} for short) if
	for every $y\in Y$ there
	exists a point $y'\in Y$ opposite to $y$ in $X$.
	An action of a group $\Gamma$ on $X$ is called \emph{completely reducible} if the fixed-point subset $X^{\Gamma}$ is completely reducible. 
	If $X = X(G^{\circ})$ is the spherical building of $G^{\circ}$ for a reductive group $G$, then an action of $\Gamma$ on $G$ is called \emph{completely reducible} if the induced action on $X$ is completely reducible.
\end{definition}

In \cite{Serre2003-2004}, Serre concentrated mainly on the case that $Y = X(G^{\circ})^{\Gamma}$ with $\Gamma$ acting by type-preserving automorphisms. It is useful to relax this condition, for instance if $G$ is not connected then $G$ itself can induce automorphisms of $X(G^{\circ})$ which are not type-preserving; the next example is a basic illustration of this. However even when $G$ is not connected, it turns out that a subgroup $H$ of $G$ is $G$-completely reducible in the usual sense if and only if $X(G^{\circ})^{H}$ is completely reducible \cite[Proposition 6.16]{Bate2005}.

\begin{example} \label{ex:gln}
Let $G := G^\circ \!\!\left<\gamma\right> \le \Aut(G^\circ)$, where $G^\circ$ is simple of adjoint type and  $\gamma$ is a non-trivial graph automorphism of $G^\circ$. Then 
the induced action of $\Gamma = \left<\gamma\right>$ on the building  $X(G^{\circ})$ of $G^\circ$ is not type-preserving.
Thanks to \cite[Theorem 7.2]{Steinberg68}, $\gamma$ stabilizes a Borel subgroup of $G^\circ$. If $\gamma$ also stabilizes a maximal torus of a stable Borel subgroup it is called  \emph{quasi-semisimple}. 
In that case the $G$-conjugacy class of $\gamma$ is closed in $G$, owing to \cite[Corollaire II 2.22]{Spaltenstein82}. It follows from \cite[Theorem 9.3]{Bate2017} that $\Gamma = \left<\gamma\right>$ is $G$-completely reducible.
Thus $\Gamma$ acts completely reducibly on $X(G^{\circ})$
in the sense of Definition~\ref{def:cr-action},
by \cite[Proposition 6.16]{Bate2005}.

In general there may
be elements in a non-connected group $G$ that induce non-quasi-semisimple 
automorphisms of $G^\circ$.
For instance, 
let $G = \Aut(G^\circ)$, where $G^\circ$ is an adjoint simple group of type $D_4$ over an algebraically closed field of characteristic $3$. 
There are exactly two conjugacy
classes of cyclic groups of order three generated by outer automorphisms in $G$. Let 
$\gamma_1$ and $\gamma_2$
be representatives of the respective unipotent $G$-classes, as in 
\cite[Proposition 4.1]{Bate2015}.
They both act non-type preservingly on $X(G^{\circ})$.
By \cite[Proposition 4.1]{Bate2015}, the $G$-class of $\gamma_1$ is closed while the 
$G$-class of $\gamma_2$ is not (it contains the former in its closure). So again by
\cite[Corollaire II 2.22]{Spaltenstein82},
$\gamma_1$ is quasi-semisimple
while $\gamma_2$ is not.
It follows from 
\cite[Theorem 9.3]{Bate2017} that $\Gamma_1 = \left<\gamma_1\right>$ is $G$-completely reducible
while $\Gamma_2 = \left<\gamma_2\right>$ is not. 
Thus $\Gamma_1$ acts completely reducibly on $X(G^{\circ})$
and $\Gamma_2$ does not.
\end{example}

We now come to some variations on the idea of complete reducibility. In the first instance, suppose that $G$ is not necessarily connected, and let $X(G)$ denote the poset of R-parabolic subgroups of $G$ (see \S \ref{sec:prelims}), ordered by reverse inclusion. If $G$ is connected then R-parabolic subgroups coincide with parabolic subgroups, see \cite[\S 6]{Bate2005} for a detailed discussion. While akin to a building, in this case $X(G)$ fails to be a spherical building, or even a simplicial complex  in general (cf.\ \cite[Example 2.3]{Bate2019}). However, the crucial notion of \emph{opposition} in $X(G)$ still makes sense: Two R-parabolic subgroups $P,Q \in X(G)$ are opposite if and only if $P \cap Q$ is an R-Levi subgroup of each. Thus the concept of complete reducibility extends naturally to $X(G)$.

Further variations of $G$-complete reducibility arise by restricting attention to certain convex subsets of $X(G^{\circ})$ and their fixed points under $\Gamma$. 
In one direction, since $G$-complete reducibility for non-connected groups is naturally defined in terms of cocharacters of $G$, \cite[\S 6]{Bate2005}, by restricting attention to cocharacters of an arbitrary reductive subgroup $K$ of $G$, one arrives at the notion of \emph{relative $G$-complete reducibility with respect to $K$}, see Definition \ref{def:relativeCR}.

In another vein, 
suppose $G$ is connected and defined over a finite field and thus equipped with a 
\emph{Steinberg endomorphism}
$\sigma$, i.e.\ 
a surjective endomorphism of $G$ 
that fixes only finitely many points, see \cite{Steinberg68} for a detailed discussion.
Restricting attention to $\sigma$-stable parabolic subgroups of $G$ and $\sigma$-stable Levi subgroups thereof, 
we come to the notion of \emph{$\sigma$-complete reducibility} from \cite{Herpel2011}, 
see Definition \ref{def:sigma-cr}.
Note that $\sigma$ induces an automorphism of $X(G^{\circ})$
also denoted by $\sigma$ (which need not be type-preserving). 

Our main result is now as follows.

\begin{theorem} \label{thm:generalisation} %
Let $G$ be a (possibly non-connected) reductive algebraic group over an algebraically closed field.
\begin{enumerate}[label=(\roman*)]
\item The action of an abstract group $\Gamma$ on $G$ is completely reducible in the sense of Definition~\ref{def:cr-action} if and only if every member of $X(G)^{\Gamma}$ has an opposite in $X(G)^\Gamma$. \label{thm:nonconnected}
\item If $H \le N_G(K^{\circ})$ for a reductive subgroup $K$ of $G$, then $H$ is relatively $G$-completely reducible with respect to $K$ if and only if the induced action of $H$ on $X(K^{\circ})$ is completely reducible. \label{thm:gen-rel}
\item Let $G$ be connected and let $\sigma$
be a Steinberg endomorphism of $G$. Let $H$ be a subgroup of $G$ and let $\Gamma$ be the subgroup of $\Aut(X(G))$ generated by $\sigma$ and the image of $H$. Then $H$ is $\sigma$-completely reducible if and only if $\Gamma$ acts completely reducibly on $X(G)$. \label{thm:gen-sigma}
\end{enumerate}
\end{theorem}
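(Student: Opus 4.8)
The unifying strategy is to translate each notion into the language of cocharacters and (R-)parabolic subgroups, and then to verify that the relevant group-theoretic opposition matches Serre's notion of opposition in the building. For \ref{thm:nonconnected}, I would first recall the cocharacter description of R-parabolic and R-Levi subgroups from \cite{Bate2005}: every R-parabolic of $G$ has the form $P_\lambda(G)$ for a cocharacter $\lambda$ (which lands in $G^{\circ}$) and satisfies $P_\lambda(G)\cap G^{\circ}=P_\lambda(G^{\circ})$, while the R-Levi subgroups are the $L_\lambda(G)=C_G(\lambda)$ with $L_\lambda(G)\cap G^{\circ}=L_\lambda(G^{\circ})$. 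This yields a $\Gamma$-equivariant map $X(G)\to X(G^{\circ})$, $P\mapsto P\cap G^{\circ}$, sending opposite R-parabolics to opposite parabolics, since $P\cap Q$ being an R-Levi of each is equivalent to $(P\cap G^{\circ})\cap(Q\cap G^{\circ})$ being a common Levi. The core step is then to promote \cite[Proposition 6.16]{Bate2005} from subgroups to abstract actions: that this map restricts to an opposition-preserving correspondence between $X(G)^{\Gamma}$ and $X(G^{\circ})^{\Gamma}$, where in the non-type-preserving case the points of $X(G^{\circ})^{\Gamma}$ are the barycentres of $\Gamma$-stable simplices. Granting this, ``$X(G^{\circ})^{\Gamma}$ is $X(G^{\circ})$-cr'' is literally the statement that every member of $X(G)^{\Gamma}$ has an opposite in $X(G)^{\Gamma}$.

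For \ref{thm:gen-rel}, since $H\le N_G(K^{\circ})$, conjugation gives an action of $H$ on $K^{\circ}$ and hence on the genuine building $X(K^{\circ})$. The relative notion is governed by cocharacters $\lambda\in Y(K^{\circ})$, and each such $\lambda$ determines both the simplex of $X(K^{\circ})$ attached to $P_\lambda(K^{\circ})$ and the R-parabolic $P_\lambda(G)$, with $P_\lambda(K^{\circ})=P_\lambda(G)\cap K^{\circ}$ and $L_\lambda(K^{\circ})=L_\lambda(G)\cap K^{\circ}$. The plan is to match Definition~\ref{def:relativeCR} term by term with the condition that $X(K^{\circ})^{H}$ is $X(K^{\circ})$-cr: a $K$-cocharacter destabilising $H$ corresponds to an $H$-fixed point of $X(K^{\circ})$, and the existence of a suitable R-Levi $L_\mu(G)\supseteq H$ with $P_\mu(G)=P_\lambda(G)$ corresponds to an $H$-fixed opposite point. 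The key lemma is the equivalence, for $\lambda\in Y(K^{\circ})$, between $H\le P_\lambda(G)$ and $H$ fixing the simplex of $X(K^{\circ})$ attached to $P_\lambda(K^{\circ})$; the forward implication is immediate from $H\le N_G(K^{\circ})$, while the reverse is the delicate point and is precisely where this hypothesis is essential.

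For \ref{thm:gen-sigma}, $G$ is connected, so $X(G)=X(G^{\circ})$ is a genuine spherical building with simplices the parabolic subgroups and opposition given by sharing a common Levi. As $\sigma\in\Gamma$ and the image of $H$ in $\Aut(X(G))$ acts by inner automorphisms, a simplex is $\Gamma$-stable exactly when the corresponding parabolic $P$ satisfies $\sigma(P)=P$ and $H\le N_G(P)=P$; thus $X(G)^{\Gamma}$ is identified with the set of $\sigma$-stable parabolics containing $H$. The remaining point is the $\sigma$-equivariance of the bijection between Levi subgroups of a parabolic $P$ and parabolics opposite to $P$: a $\sigma$-stable Levi $L$ of $P$ with $H\le L$ corresponds to a $\sigma$-stable opposite parabolic $Q$ with $P\cap Q=L$ and $H\le L\le Q$, and conversely. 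Hence Definition~\ref{def:sigma-cr} --- every $\sigma$-stable parabolic containing $H$ admits a $\sigma$-stable Levi containing $H$ --- is exactly the assertion that every point of $X(G)^{\Gamma}$ has an opposite in $X(G)^{\Gamma}$, i.e.\ that $\Gamma$ acts completely reducibly.

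I expect the main obstacle to be uniform across the three parts: correctly handling fixed points of actions that need not be type-preserving. In each case the fixed set lives in the barycentric subdivision, its points are barycentres of stable (rather than pointwise-fixed) simplices, and one must verify that Serre's opposition relation on these barycentres coincides with the group-theoretic opposition of the underlying (R-)parabolic subgroups. For \ref{thm:nonconnected} this is bundled into upgrading \cite[Proposition 6.16]{Bate2005} to arbitrary abstract actions, and for \ref{thm:gen-rel} it reappears in the lemma identifying $H$-fixed simplices of $X(K^{\circ})$ with $K$-cocharacters that destabilise $H$ in $G$, the step that genuinely uses $H\le N_G(K^{\circ})$. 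Once opposition is shown to be preserved in both directions, each equivalence follows from the term-by-term dictionary above.
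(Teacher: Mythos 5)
Your outlines for parts \ref{thm:nonconnected} and \ref{thm:gen-sigma} are essentially sound and close to the paper's own arguments: for \ref{thm:nonconnected} the paper likewise passes between $X(G)^{\Gamma}$ and $X(G^{\circ})^{\Gamma}$ via $P\mapsto P^{\circ}$ and the proof of \cite[Proposition~6.16]{Bate2005}, although the ingredient you leave implicit --- that $N_G(P^{\circ})$ is again an R-parabolic subgroup of $G$ by \cite[Proposition~5.4(a)]{Martin03}, which is what produces a $\Gamma$-stable element of $X(G)$ lying over a given $\Gamma$-stable parabolic of $G^{\circ}$ and, applied to the opposite, a $\Gamma$-stable opposite in $X(G)$ --- is exactly where the content sits; and your treatment of \ref{thm:gen-sigma} matches Proposition~\ref{prop:Sigma}.

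The genuine gap is in part \ref{thm:gen-rel}. Your proposed key lemma --- that for $\lambda\in Y(K^{\circ})$ and $H\le N_G(K^{\circ})$, $H$ fixes the simplex of $P_\lambda(K^{\circ})$ in $X(K^{\circ})$ if and only if $H\le P_\lambda(G)$ --- is false in the direction you yourself flag as delicate. Take $G=\GL_4$, $K=K^{\circ}=\GL_2\times\GL_2$ block-diagonal, $H=\langle s\rangle$ with $s$ the permutation matrix swapping the two blocks, and $\lambda(t)=\mathrm{diag}(t,1,t^{2},1)$. Then $P_\lambda(K^{\circ})$ is the product of the two upper-triangular Borel subgroups, which $s$ normalises, but $P_\lambda(G)$ is the stabiliser of the flag $\langle e_3\rangle\subset\langle e_1,e_3\rangle$, which does not contain $s$; replacing $\lambda$ by $\mu(t)=\mathrm{diag}(t,1,t,1)$ yields the same parabolic of $K^{\circ}$ but a different $P_\mu(G)$ that does contain $s$. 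The point is that Definition~\ref{def:relativeCR} is a condition on cocharacters of $K$ and containments in $G$: many cocharacters inducing the same parabolic of $K^{\circ}$ give genuinely different R-parabolic subgroups of $G$, and cocharacters central in $K^{\circ}$ contribute only the empty simplex to $X(K^{\circ})$ while still producing proper subgroups $P_\lambda(G)$ that the relative condition must handle. A per-cocharacter dictionary therefore cannot close the argument. The paper instead reduces to $K=K^{\circ}$, invokes \cite[Theorem~1]{Gruchot2019} to convert relative $G$-complete reducibility into ordinary complete reducibility of $\pi(H)$ inside $N_G(K)/C_G(K)$, applies part~\ref{thm:nonconnected} there, and transfers the completely reducible action from $\pi(K)$ back to $K$ via Lemma~\ref{lem:equivariant}\ref{equivariant-ii}, using that the connected kernel of $K\to\pi(K)$ is the torus $Z(K)^{\circ}$. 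Some substitute for this machinery --- above all for \cite[Theorem~1]{Gruchot2019}, which is the non-trivial bridge between the two sides --- is needed to repair your proof of \ref{thm:gen-rel}.
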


\needspace{2\baselineskip}
\begin{remarks}  \leavevmode 
\begin{itemize}
\item In the case that $\Gamma$ is the image of a subgroup $H$ of $G$ acting by conjugation, part~\ref{thm:nonconnected} recovers \cite[Proposition~6.16]{Bate2005}. Our proof is similar, although we must stick to the language of \emph{normalising} an R-parabolic subgroup or R-Levi subgroup, rather than being contained in it.
\item In part \ref{thm:nonconnected}, one might reasonably expect that some information is lost when passing from an action on the poset $X(G)$ to the induced action on the building $X(G^{\circ})$; our result tells us this is not so, as the natural definition of `completely reducible action on $X(G)$' is equivalent to Definition~\ref{def:cr-action}.
\item In part \ref{thm:gen-sigma}, we emphasize that there are no conditions whatsoever on the subgroup $H$. By \cite[Theorem 1.4]{Herpel2011} (see Theorem \ref{thm:sigma-cr} below), in the special case that $H$ is $\sigma$-stable, it is $G$-completely reducible if and only if it is $\sigma$-completely reducible: That is, $H$ acts completely reducibly on $X(G)$ if and only if $\Gamma$ acts completely reducibly on $X(G)$. Thus Theorem~\ref{thm:generalisation}\ref{thm:gen-sigma} generalises \cite[Theorem 1.4]{Herpel2011} to subgroups $H$ which are not $\sigma$-stable.
\end{itemize}
\end{remarks}

\section{Preliminaries} \label{sec:prelims}

\subsection{Notation and background} Throughout, we let $k$ be an algebraically closed field, and $G$ is a possibly non-connected reductive algebraic group over $k$. We let $Y(G)$ denote the cocharacter group of $G$, consisting of morphisms of algebraic groups $k^{\ast} \to G$. The \emph{limit} of a morphism $\phi \colon k^{\ast} \to G$ is a morphism $\widehat{\phi} \colon k \to G$ extending $\phi$, when this exists, and in this case we write $\lim_{a \to 0} \phi(a) := \widehat{\phi}(0)$. Each $\lambda \in Y(G)$ determines an \emph{R-parabolic subgroup} of $G$ via $P_\lambda:=\{g\in G\mid \lim_{a\to 0} \lambda(a) \cdot g \text{ exists}\}$, where the dot denotes left-conjugation of $G$ on itself, and the \emph{R-Levi subgroup} of $G$ corresponding to $\lambda$ is $L_\lambda:=\{g\in G\mid \lim_{a\to 0} \lambda(a) \cdot g = g\}$. We have $L_{\lambda} = C_{G}(\lambda(k^{\ast}))$. If $P$ is an R-parabolic subgroup of $G$, then by an \emph{R-Levi subgroup of $P$}, we mean a subgroup $L_{\lambda}$ such that $P = P_{\lambda}$. We still have a semidirect product decomposition $P_{\lambda} = R_{u}(P_{\lambda}) L_{\lambda}$, with $R_u(P_\lambda) = \{ g \in G \mid \lim_{a \to 0} \lambda(a) \cdot g = 1 \}$.

Let $X(G)$ denote the poset of R-parabolic subgroups of $G$ under reverse inclusion. R-parabolic subgroups of $G$ are parabolic in the sense that $G/P$ is a complete variety, but the converse is false if $G$ is not connected \cite[\S 6]{Bate2005}. As mentioned in the introduction, the notion of opposition still makes sense for $X(G)$: Two R-parabolic subgroups $P$ and $Q$ are opposite if they intersect in a common R-Levi subgroup. This permits a natural definition of complete reducibility of subsets of $X(G)$.

\subsection{Completely reducible actions and equivariant morphisms.} 

First we prove the counterpart of \cite[Lemma 2.12]{Bate2005} in our more general setting of completely reducible actions. This is a vital ingredient for proving part~\ref{thm:gen-rel} of Theorem~\ref{thm:generalisation}.
Recall that a homomorphism of reductive algebraic groups is said to be \emph{non-degenerate} provided
the identity component of its kernel is a torus.

\begin{lemma} \label{lem:equivariant}
	Let $G_1$ and $G_2$ be reductive algebraic groups, and let $\Gamma$ act on $G_1$ and $G_2$. Let $f \colon G_1 \to G_2$ be a surjective $\Gamma$-equivariant homomorphism.
	\begin{enumerate}[label=(\roman*)]
		\item If $\Gamma$ acts completely reducibly on $G_1$ then $\Gamma$ acts completely reducibly on $G_2$. \label{equivariant-i}
		\item If $f$ is non-degenerate then $\Gamma$ acts completely reducibly on $G_1$ if and only if $\Gamma$ acts completely reducibly on $G_2$. \label{equivariant-ii}
	\end{enumerate}
\end{lemma}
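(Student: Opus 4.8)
The plan is to study the morphism of buildings $f_{\ast}\colon X(G_1^{\circ}) \to X(G_2^{\circ})$ induced by $f$ on the identity components, following the template of \cite[Lemma 2.12]{Bate2005}, and to show that it interacts well with both opposition and the $\Gamma$-actions. First I would record the basic dictionary: since $f$ is surjective, its restriction $f|_{G_1^{\circ}}\colon G_1^{\circ} \to G_2^{\circ}$ is a surjective homomorphism of connected reductive groups, and for $\lambda \in Y(G_1^{\circ})$ one has $f(P_\lambda) = P_{f \circ \lambda}$ and $f(L_\lambda) = L_{f \circ \lambda}$. Hence $P \mapsto f(P)$ is well defined on parabolic subgroups, order preserving, and surjective, with $(f|_{G_1^{\circ}})^{-1}(Q)$ a parabolic mapping onto any parabolic $Q$ of $G_2^{\circ}$; this is the map $f_{\ast}$. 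Because $f$ is $\Gamma$-equivariant, so is $f_{\ast}$, and therefore $f_{\ast}$ carries $Y_1 \deq X(G_1^{\circ})^{\Gamma}$ into $Y_2 \deq X(G_2^{\circ})^{\Gamma}$, while the preimage construction shows it maps $Y_1$ onto $Y_2$: if $Q$ is a $\Gamma$-stable parabolic of $G_2^{\circ}$ then $(f|_{G_1^{\circ}})^{-1}(Q)$ is a $\Gamma$-stable parabolic of $G_1^{\circ}$ lying over it.

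For part~\ref{equivariant-i} I would argue directly from Definition~\ref{def:cr-action}. Let $y_2 \in Y_2$, corresponding to a proper $\Gamma$-stable parabolic $Q \subsetneq G_2^{\circ}$, and lift it to the point $y_1 \in Y_1$ given by the proper parabolic $(f|_{G_1^{\circ}})^{-1}(Q)$, which is proper since $f$ is surjective. As $Y_1$ is $X(G_1^{\circ})$-cr there is $y_1' \in Y_1$ opposite to $y_1$; writing $y_1 = P_\lambda$ and $y_1' = P_{-\lambda}$ for a suitable cocharacter $\lambda$, the images are $f_{\ast}(y_1) = P_{f \circ \lambda} = y_2$ and $f_{\ast}(y_1') = P_{-f \circ \lambda}$. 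Since $y_2$ is proper we have $f \circ \lambda \neq 0$, so $P_{f\circ\lambda}$ and $P_{-f\circ\lambda}$ are genuinely opposite in $X(G_2^{\circ})$; thus $y_2' \deq f_{\ast}(y_1') \in Y_2$ is an opposite of $y_2$ in $Y_2$. As $y_2$ was arbitrary, $Y_2$ is $X(G_2^{\circ})$-cr, i.e.\ $\Gamma$ acts completely reducibly on $G_2$.

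For part~\ref{equivariant-ii} the plan is to upgrade $f_{\ast}$ to an isomorphism. If $f$ is non-degenerate then $(\ker f)^{\circ}$ is a torus; being a connected normal subgroup that is a torus, it is central in $G_1^{\circ}$, hence contained in $R(G_1^{\circ})$ and so in every parabolic subgroup of $G_1^{\circ}$. Consequently $f \circ \lambda = 0$ forces $\lambda$ to be central and $P_\lambda = G_1^{\circ}$, so $f_{\ast}$ collapses no proper simplex; together with the preimage map $Q \mapsto (f|_{G_1^{\circ}})^{-1}(Q)$ as a two-sided inverse on parabolics, this shows $f_{\ast}$ is an inclusion- and opposition-preserving bijection, that is, an isomorphism of buildings. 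Being $\Gamma$-equivariant, it restricts to an isomorphism $Y_1 \xrightarrow{\sim} Y_2$ of convex subsets preserving opposition, whence $Y_1$ is $X(G_1^{\circ})$-cr if and only if $Y_2$ is; combined with part~\ref{equivariant-i} this gives the equivalence.

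The main obstacle I anticipate is the bookkeeping for non-type-preserving actions: when $\Gamma$ does not act type-preservingly the fixed sets $Y_i$ live in the barycentric subdivisions, so I must phrase ``lifting a $\Gamma$-stable parabolic'' and ``realizing an opposite pair as $P_\lambda, P_{-\lambda}$'' in terms of $\Gamma$-stable (rather than pointwise $\Gamma$-fixed) parabolics and their opposites, so that the cocharacter computation above still applies verbatim. A secondary point to pin down is the claim in part~\ref{equivariant-ii} that non-degeneracy forces the kernel into every parabolic, equivalently that the induced map on root systems is a bijection; this is precisely what makes $f_{\ast}$ an isomorphism rather than merely a surjection, and hence what powers the converse direction.
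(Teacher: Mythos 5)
Your argument is correct, but it runs along a genuinely different track from the paper's. The paper literally mirrors \cite[Lemma 2.12]{Bate2005}: it works with cocharacters and the R-parabolic/R-Levi subgroups of the full, possibly non-connected groups $G_1$ and $G_2$, writes $G_1 = MN$ with $N = \ker f$ using \cite[Lemma 6.14]{Bate2005}, lifts a cocharacter $\mu$ of $G_2$ by solving $n\mu = f\circ\lambda$ with $\lambda \in Y(M)$, and transports $\Gamma$-stable R-Levi subgroups back and forth via \cite[Lemma 6.15]{Bate2005}; for the converse it lifts the element $u \in R_u(P_{f\circ\lambda})$ to $u_1 \in R_u(P_\lambda)$ and uses that $\ker f$ lies in every R-Levi subgroup of $G_1$. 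You instead pass immediately to the induced simplicial map $f_{\ast}$ on the buildings of the identity components, lift parabolics by taking preimages, push forward opposite pairs $P_\lambda$, $P_{-\lambda}$, and in part~(ii) upgrade $f_{\ast}$ to a $\Gamma$-equivariant isomorphism of buildings. Your route hews closer to Definition~\ref{def:cr-action} as literally stated (the paper's proof tacitly invokes the reformulation in terms of $\Gamma$-stable R-parabolics and R-Levis, which is only established in Proposition~\ref{prop:G-cr-G0-cr}), and the building-isomorphism statement in~(ii) is a clean conceptual endpoint; the paper's cocharacter formulation has the advantage that it is exactly what is consumed downstream in the proof of Proposition~\ref{prop:rel-G-cr}. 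Two small points to tidy: in~(ii) you need all of $\ker f \cap G_1^{\circ}$, not merely $(\ker f)^{\circ}$, to lie in every parabolic for the preimage map to be a two-sided inverse --- this does hold, since a normal subgroup of a connected reductive group whose identity component is a central torus is itself central (for $n$ in the kernel, $h \mapsto hnh^{-1}n^{-1}$ is a homomorphism into the torus $(\ker f)^{\circ}$, hence trivial on the derived group and the connected centre, hence trivial); and in~(i) the operative point is not that $f\circ\lambda \neq 0$ but simply that $P_{f\circ\lambda} = Q$ is proper, whence its opposite $P_{-f\circ\lambda}$ is proper as well.
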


\begin{proof} We mirror the proof of \cite[Lemma 2.12]{Bate2005} (cf.\ also \cite[\S 6.2]{Bate2005}). Let $N = \ker f$. By \cite[Lemma 6.14]{Bate2005}, there exists a subgroup $M$ of $G$ such that $G = MN$, $M \cap N$ is finite and $M$ and $N$ commute. For part \ref{equivariant-i}, suppose that $\Gamma$ acts completely reducibly on $G_1$. Let $\mu \in Y(G_2)$ such that $\Gamma$ stabilises $P_{\mu}(G_2)$. We can write $n\mu = f \circ \lambda$ for some $\lambda \in Y(M)$ and some sufficiently large integer $n$. By \cite[Lemma 6.15(ii)]{Bate2005}, 
	\[P_{\lambda}(G_1) = f^{-1}(f(P_\lambda(G_1))) = f^{-1}(f(\Gamma \cdot P_{\lambda}(G_1))) \supseteq \Gamma \cdot P_\lambda(G_1).\] 
	Since $\Gamma$ acts completely reducibly on $G_1$, there exists some $u \in R_u(P_\lambda(G_1))$ such that 
	$\Gamma \cdot L_{u \cdot \lambda} \subseteq L_{u \cdot \lambda}(G_1)$. By \cite[Lemma 6.15(i)]{Bate2005}, \[\Gamma \cdot L_{f(u) \cdot \mu}(G_2) = f(\Gamma \cdot L_{u \cdot \lambda}) = f(L_{u \cdot \lambda}) = L_{f(u) \cdot \mu}(G_2).\] Thus $\Gamma$ acts completely reducibly on $G_2$.
	
	\ref{equivariant-ii} Suppose now that $\Gamma$ acts completely reducibly on $G_2$. Let $\lambda \in Y(G_1)$ such that $\Gamma \cdot P_\lambda \subseteq P_\lambda$. Then $P_{f \circ \lambda} = f(\Gamma \cdot P_\lambda) = \Gamma \cdot P_{f \circ \lambda}$, by \cite[Lemma 6.14(i)]{Bate2005}. Since $\Gamma$ acts completely reducibly on $G_2$, there exists $u \in R_u(P_{f \circ \lambda})$ such that $\Gamma \cdot L_{u \cdot (f \circ \lambda)} \subseteq L_{u \cdot (f \circ \lambda)}$. By \cite[Lemma 6.15(iv)]{Bate2005}, there exists $u_1 \in R_u(P_\lambda)$ such that $f(u_1) = u$. We then have 
	\[\Gamma \cdot L_{u_1 \cdot \lambda} \subseteq f^{-1}(f(\Gamma \cdot L_{u_1 \cdot \lambda})) = f^{-1}(L_{u \cdot (f \circ \lambda)}) = NL_{u_1 \cdot \lambda} = L_{u_1 \cdot \lambda},\] 
	since $N$ is contained in every R-Levi subgroup of $G_1$. Therefore, $\Gamma$ acts completely reducibly on $G_1$.
\end{proof}

\section{Proof of Theorem~\ref{thm:generalisation}}

We prove the individual parts of Theorem~\ref{thm:generalisation} separately, since the exact nature of the acting group changes in each scenario. 

\subsection{\texorpdfstring{$G$}{G}-complete reducibility and the building of \texorpdfstring{$G^{\circ}$}{G°}.} Before proving part~\ref{thm:nonconnected} of Theorem~\ref{thm:generalisation}, we require one more piece of set-up. It is easily seen that 
for an R-parabolic subgroup $P$ of $G$ we have
$P^{\circ} = G^{\circ} \cap P$ and $R_{u}(P) = R_{u}(P^{\circ})$. Moreover, thanks to \cite[Proposition 5.4(a)]{Martin03}, the normaliser $N_G(P^{\circ})$ is also an R-parabolic subgroup of $G$.

\begin{proposition} \label{prop:G-cr-G0-cr} %
The action of an abstract group $\Gamma$ on $G$ is completely reducible if and only if every member of $X(G)^{\Gamma}$ has an opposite in $X(G)^\Gamma$.
\end{proposition}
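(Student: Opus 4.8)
The plan is to prove the equivalence by translating between two pictures: the building $X(G^\circ)$ of the connected component, and the poset $X(G)$ of R-parabolic subgroups of the (possibly non-connected) group $G$. The key bridge is the correspondence $P \mapsto P^\circ = G^\circ \cap P$ sending an R-parabolic subgroup $P$ of $G$ to a parabolic subgroup of $G^\circ$, which is a simplex of $X(G^\circ)$; conversely, by the remarks preceding the statement, $N_G(P^\circ)$ is an R-parabolic subgroup of $G$ recovering the non-connected object. I would first check that this correspondence is compatible with the $\Gamma$-action and with opposition, so that a $\Gamma$-fixed point in $X(G)$ corresponds to a $\Gamma$-stable simplex of $X(G^\circ)$, and two R-parabolics are opposite in $X(G)$ if and only if the corresponding parabolics of $G^\circ$ are opposite in the building.

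Let me set up the dictionary precisely. By Definition~\ref{def:cr-action}, $\Gamma$ acts completely reducibly on $G$ exactly when the fixed-point subset $X(G^\circ)^\Gamma$ is $X$-cr, i.e.\ every $\Gamma$-stable simplex of $X(G^\circ)$ has a $\Gamma$-stable opposite simplex. So I must show this is equivalent to the statement that every member of $X(G)^\Gamma$ has an opposite in $X(G)^\Gamma$. Given a $\Gamma$-fixed R-parabolic $P \in X(G)^\Gamma$, the simplex $P^\circ$ is $\Gamma$-stable since $\Gamma$ stabilises $P$ and hence $G^\circ \cap P$; running this backwards, a $\Gamma$-stable simplex $S$ of $X(G^\circ)$ corresponds to the R-parabolic $N_G(S)$ of $G$, which is $\Gamma$-fixed because $\Gamma$ stabilises $S$. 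This yields a $\Gamma$-equivariant, inclusion-reversing bijection between $X(G)^\Gamma$ and the $\Gamma$-stable simplices of $X(G^\circ)$, as in the proof of \cite[Proposition 6.16]{Bate2005}.

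The forward and backward implications then follow by chasing opposition through this dictionary. If every $\Gamma$-fixed R-parabolic $P$ has a $\Gamma$-fixed opposite $Q$, then $P \cap Q$ is a common R-Levi subgroup, and passing to $G^\circ$ gives opposite $\Gamma$-stable parabolic subgroups, so $P^\circ$ and $Q^\circ$ are opposite simplices in $X(G^\circ)$, both $\Gamma$-stable; hence $X(G^\circ)^\Gamma$ is $X$-cr. Conversely, if a $\Gamma$-stable simplex $P^\circ$ has a $\Gamma$-stable opposite simplex in the building, I lift it to a $\Gamma$-fixed R-parabolic $Q = N_G(Q^\circ)$, and verify that $P$ and $Q$ are opposite in $X(G)$, i.e.\ that $P \cap Q$ is a common R-Levi subgroup of each. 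Here it is convenient to phrase opposition via cocharacters: a $\Gamma$-stable opposite in $X(G^\circ)$ corresponds to being able to conjugate $\lambda$ by an element $u \in R_u(P_\lambda)$ so that the R-Levi $L_{u \cdot \lambda}$ is $\Gamma$-stable, matching the formulation of complete reducibility used in Lemma~\ref{lem:equivariant}.

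The main obstacle, as the authors flag in their remarks, is that one cannot work directly with the language of a subgroup being \emph{contained} in an R-parabolic or R-Levi, as in the original \cite[Proposition 6.16]{Bate2005}; instead one must consistently use the language of $\Gamma$ \emph{normalising} (stabilising) these subgroups. The delicate point is ensuring that the passage $P \leftrightarrow P^\circ$ genuinely intertwines opposition on both sides, including the subtlety that $\Gamma$ may act by automorphisms of $X(G^\circ)$ that are not type-preserving (so that fixed simplices may only live in the barycentric subdivision). I would handle this by working at the level of R-parabolic and R-Levi subgroups of $G$ themselves, where opposition is defined intrinsically via the condition that $P \cap Q$ be a common R-Levi, and then invoking the identities $P^\circ = G^\circ \cap P$ and $R_u(P) = R_u(P^\circ)$ together with \cite[Proposition 5.4(a)]{Martin03} to transfer this condition faithfully to the building of $G^\circ$.
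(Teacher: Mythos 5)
Your proposal follows essentially the same route as the paper: pass from a $\Gamma$-stable parabolic subgroup $P$ of $G^{\circ}$ to the $\Gamma$-stable R-parabolic subgroup $N_G(P)$ via \cite[Proposition 5.4(a)]{Martin03}, and from a $\Gamma$-stable R-parabolic subgroup to its identity component, transferring opposition in each direction and invoking the proof of \cite[Proposition 6.16]{Bate2005} for the key step that $N_G(Q)$ meets $P$ in a common ($\Gamma$-stable) R-Levi subgroup. One small caveat: the correspondence $P \mapsto P^{\circ}$, $S \mapsto N_G(S)$ is not a bijection (distinct R-parabolic subgroups of $G$ can share the same identity component, e.g.\ $P$ and $N_G(P^{\circ})$ when these differ) and it is inclusion-preserving rather than inclusion-reversing; but your argument never actually uses bijectivity, so this does not affect the proof.
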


\begin{proof}
Suppose that every member of $X(G)^{\Gamma}$ admits an opposite in $X(G)^{\Gamma}$ and let $P \in X(G^{\circ})^{\Gamma}$. Then $N_G(P)$ is an R-parabolic subgroup of $G$ \cite[Proposition 5.4(a)]{Martin03} which is stabilised by $\Gamma$, hence by assumption it admits an opposite in $X(G)^{\Gamma}$. But then the identity component of this parabolic subgroup is a $\Gamma$-stable opposite to $P$ in $X(G^{\circ})$, as required.

Conversely, suppose that $\Gamma$ acts completely reducibly on $X(G^{\circ})$, and let $P$ be an R-parabolic subgroup  in $X(G)^{\Gamma}$. Then $P^{\circ}$ is also $\Gamma$-stable, so by assumption there exists an opposite parabolic subgroup $Q \in X(G^{\circ})^{\Gamma}$. Then the proof of \cite[Proposition~6.16]{Bate2005} shows that $N_G(Q)$ in $X(G)^{\Gamma}$ meets $P$ in a ($\Gamma$-stable) R-Levi subgroup, hence is the required opposite to $P$  in $X(G)^{\Gamma}$.
\end{proof}

Although Proposition~\ref{prop:G-cr-G0-cr} is perhaps a natural statement to expect, it implies at once the following statement for subgroups of $G$, which is not so clear \emph{a priori} (although it can be derived from known results: the reverse implication is given in \cite[Corollary 2.5]{Bate2015}, while the forward implication follows from a special case of \cite[Lemma~5.1]{Bate2010}).
\begin{corollary}
Let $H$ be a subgroup of a reductive algebraic group $G$. Then $H$ is $G$-completely reducible if and only if, whenever $H$ normalises an R-parabolic subgroup $P$ of $G$, it normalises an R-Levi subgroup of $P$.
\end{corollary}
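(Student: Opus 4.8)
The plan is to deduce this corollary directly from Proposition~\ref{prop:G-cr-G0-cr} by taking $\Gamma$ to be the image of $H$ acting by conjugation on $G$. The key observation is that for this particular $\Gamma$, the fixed-point set $X(G)^{\Gamma}$ consists precisely of those R-parabolic subgroups $P$ that are normalised by $H$, and $X(G^{\circ})^{\Gamma}$ consists of the parabolic subgroups of $G^{\circ}$ normalised by $H$. Thus the statement ``every member of $X(G)^{\Gamma}$ has an opposite in $X(G)^{\Gamma}$'' translates exactly into ``whenever $H$ normalises an R-parabolic subgroup $P$, it normalises an opposite R-parabolic $Q$,'' and two opposite R-parabolics meet in a common R-Levi subgroup by definition of opposition. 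So the right-hand condition of the Proposition is equivalent to the condition stated in the Corollary.

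The remaining work is to verify that $H$ being $G$-completely reducible in the usual sense coincides with $\Gamma$ acting completely reducibly on $X(G^{\circ})$ in the sense of Definition~\ref{def:cr-action}. First I would note that when $\Gamma$ is the image of $H$, the induced action on the building $X(G^{\circ})$ is exactly the conjugation action, and \cite[Proposition~6.16]{Bate2005} states precisely that $H$ is $G$-completely reducible if and only if $X(G^{\circ})^{H}$ is completely reducible, which is the definition of the action of $\Gamma$ being completely reducible. Hence the left-hand side of Proposition~\ref{prop:G-cr-G0-cr} for this $\Gamma$ is equivalent to $G$-complete reducibility of $H$.

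Putting these two equivalences together, the Corollary follows: $H$ is $G$-completely reducible iff $\Gamma$ acts completely reducibly on $X(G^{\circ})$ (by \cite[Proposition~6.16]{Bate2005}) iff every member of $X(G)^{\Gamma}$ has an opposite in $X(G)^{\Gamma}$ (by Proposition~\ref{prop:G-cr-G0-cr}) iff whenever $H$ normalises an R-parabolic $P$ it normalises an R-Levi subgroup of $P$ (by the translation of the fixed-point sets above).

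The main subtlety, rather than any genuine obstacle, lies in the careful bookkeeping of the language of \emph{normalising} versus \emph{being contained in}. For a non-connected $G$ the subgroup $H$ need not lie inside the R-parabolic subgroups it stabilises, so the correct fixed-point description is in terms of $H$-stable (equivalently $H$-normalised) R-parabolics, matching the first remark following Theorem~\ref{thm:generalisation}. Once this dictionary is fixed, the proof is purely a matter of unwinding definitions, and the only external inputs are Proposition~\ref{prop:G-cr-G0-cr} and \cite[Proposition~6.16]{Bate2005}.
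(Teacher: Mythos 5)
Your overall strategy is exactly the one the paper intends: the authors assert that the corollary follows ``at once'' from Proposition~\ref{prop:G-cr-G0-cr}, and your chain ($H$ is $G$-cr $\Leftrightarrow$ the conjugation action of $H$ is completely reducible, by \cite[Proposition~6.16]{Bate2005}, $\Leftrightarrow$ every member of $X(G)^{H}$ has an opposite in $X(G)^{H}$, by the Proposition) is the intended unwinding.

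There is, however, one step you assert without justification, and it is the only step in the argument with real content beyond the Proposition. You claim that ``$H$ normalises an R-parabolic $Q$ opposite to $P$'' is \emph{equivalent} to ``$H$ normalises an R-Levi subgroup of $P$'', but what you write only establishes one direction: if $H$ normalises $P$ and an opposite $Q$, then it normalises $P\cap Q$, which is an R-Levi of $P$. The converse---which is precisely what the harder (``if'') direction of the corollary needs---requires you to manufacture an $H$-stable opposite from an $H$-stable R-Levi $L_{\mu}$ of $P=P_{\mu}$. The natural candidate is $P_{-\mu}$, but its $H$-stability is not automatic: one needs that the R-parabolic opposite to $P$ and meeting it in the prescribed R-Levi $L_{\mu}$ is unique, so that $hP_{-\mu}h^{-1}$, being opposite to $P$ with $P\cap hP_{-\mu}h^{-1}=L_{\mu}$, must equal $P_{-\mu}$. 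This is true and standard, but it is not a consequence of the definition of opposition alone. Alternatively, you can bypass opposites entirely for this direction: if $H\le P_{\lambda}$ then $H$ normalises $P_{\lambda}$, hence by hypothesis normalises some $L_{\mu}$ with $P_{\mu}=P_{\lambda}$, and then $H\le P_{\mu}\cap N_G(L_{\mu})=L_{\mu}$ by \cite[Lemma~2.3]{Bate2015} (the same fact the paper invokes in the proof of Proposition~\ref{prop:Sigma}), which gives $G$-complete reducibility of $H$ directly. Either repair is short, but as written the equivalence is asserted rather than proved.
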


\subsection{Relative complete reducibility} \label{sec:rel} 
First we recall \cite[Definition 3.1]{Bate2011}.

\begin{definition}
	\label{def:relativeCR}
	Let $H$ and $K$ be subgroups of $G$ with $K$ reductive. We say that $H$ is \emph{relatively $G$-completely reducible with respect to $K$} if, whenever $\lambda \in Y(K)$ such that $H \le P_{\lambda}$, there exists $\mu \in Y(K)$ with $P_{\lambda} = P_{\mu}$ and $H \le L_{\mu}$.
\end{definition}

Armed with Lemma~\ref{lem:equivariant} we can now address
part~\ref{thm:gen-rel} of Theorem~\ref{thm:generalisation}.

\begin{proposition} \label{prop:rel-G-cr}
With the above notation, suppose $H$ is a subgroup of $G$ which normalises $K^{\circ}$. Then $H$ is relatively $G$-completely reducible with respect to $K$ if and only if the conjugation action of $H$ on $K^{\circ}$ is completely reducible.
\end{proposition}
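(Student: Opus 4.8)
The plan is to reconcile two notions that live on slightly different objects: relative $G$-complete reducibility is defined using cocharacters of $K$ (equivalently, the R-parabolic subgroups of the form $P_\lambda(G)$ with $\lambda \in Y(K)$), whereas complete reducibility of the conjugation action refers to the building $X(K^\circ)$ and its fixed-point subset under $H$. The key observation to exploit is that for $\lambda \in Y(K)$, the R-parabolic subgroup $P_\lambda(K)$ of $K$ is exactly $P_\lambda(G) \cap K$, and the condition $H \le P_\lambda(G)$ corresponds, after passing to $K^\circ$ and using that $H$ normalises $K^\circ$, to $H$ stabilising $P_\lambda(K^\circ)$ by conjugation. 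Thus the natural first step is to translate Definition \ref{def:relativeCR}, stated in terms of containment $H \le P_\lambda(G)$, into the language of the action of $H$ on $X(K^\circ)$, replacing ``containment'' with ``normalising'' just as in the remarks following Theorem \ref{thm:generalisation}, and to verify that an $H$-stable R-Levi of $P_\lambda(K^\circ)$ is produced by a cocharacter $\mu \in Y(K)$ with $P_\mu(G) = P_\lambda(G)$.

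Next I would reduce from $K$ to $K^\circ$ and then remove the central torus obstruction. Since cocharacters of $K$ and of $K^\circ$ agree (every cocharacter has connected, hence identity-component, image), relative complete reducibility with respect to $K$ is governed entirely by $K^\circ$; similarly $X(K^\circ)$ only sees $K^\circ$. So the real content is to compare relative $G$-complete reducibility with respect to $K^\circ$ against the $H$-action on $X(K^\circ)$. Here is where Lemma \ref{lem:equivariant} enters: the natural map one wants is the adjoint-type quotient $f \colon K^\circ \to K^\circ/Z(K^\circ)^\circ$, or more precisely a surjection onto $K^\circ/R(K^\circ)$ modulo the semisimple part, killing the central torus. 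Because $H$ normalises $K^\circ$, conjugation by $H$ descends to an $H$-equivariant action on the quotient, and this quotient map is \emph{non-degenerate} (its kernel has identity component a torus). Lemma \ref{lem:equivariant}\ref{equivariant-ii} then guarantees that complete reducibility of the $H$-action is preserved and reflected by $f$, so the central torus plays no role on either side of the equivalence.

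The cleanest route, I expect, is to combine these two steps with Proposition \ref{prop:G-cr-G0-cr} applied inside $K^\circ$: once we know that relative $G$-cr with respect to $K^\circ$ is the statement that whenever $H$ normalises an R-parabolic $P_\lambda(K^\circ)$ it normalises an R-Levi thereof (via a $\mu \in Y(K^\circ)$ with the same R-parabolic in $G$), this is precisely the condition that every member of $X(K^\circ)^H$ has an opposite in $X(K^\circ)^H$, which by Proposition \ref{prop:G-cr-G0-cr} (with $K^\circ$ in the role of $G^\circ$ and $\Gamma$ the image of $H$) is complete reducibility of the induced action on $X(K^\circ)$. The main obstacle, and the point deserving the most care, is the compatibility of R-parabolics and R-Levis across the inclusion $K \hookrightarrow G$: I must ensure that a cocharacter $\mu \in Y(K)$ producing an $H$-stable R-Levi $L_\mu(K^\circ)$ inside $K^\circ$ indeed satisfies $P_\mu(G) = P_\lambda(G)$ and $H \le L_\mu(G)$, not merely the analogous statements inside $K^\circ$. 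This is exactly the interface between the \emph{relative} definition (which refers to R-parabolics of $G$) and the \emph{intrinsic} building $X(K^\circ)$, and verifying that the two opposition relations coincide for cocharacters drawn from $K$ is the crux of the argument.
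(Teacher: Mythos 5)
Your overall strategy (reduce to $K^{\circ}$, remove a central torus via Lemma~\ref{lem:equivariant}\ref{equivariant-ii}, then apply Proposition~\ref{prop:G-cr-G0-cr} to the group $K^{\circ}$) has the right general shape, but the step you yourself flag as ``the crux'' is exactly the mathematical content of the proposition, and your proposal contains no argument for it. The relative notion in Definition~\ref{def:relativeCR} is a statement about containments $H \le P_{\lambda}(G)$ and $H \le L_{\mu}(G)$ in the \emph{ambient} group $G$, for cocharacters drawn from $K$; the building-theoretic notion is intrinsic to $K^{\circ}$ and records only which parabolic subgroups of $K^{\circ}$ are \emph{normalised} by $H$. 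Your ``key observation'' gives one implication cheaply ($H \le P_{\lambda}(G)$ forces $H$ to normalise $P_{\lambda}(G) \cap K^{\circ} = P_{\lambda}(K^{\circ})$), but you offer nothing for the converse matching: why every $H$-stable parabolic of $K^{\circ}$ that must be given an opposite actually arises from some $\lambda \in Y(K)$ with $H \le P_{\lambda}(G)$, and why an $H$-stable R-Levi subgroup of $P_{\lambda}(K^{\circ})$ can be realised by a cocharacter $\mu \in Y(K)$ with $P_{\mu}(G) = P_{\lambda}(G)$ and $H \le L_{\mu}(G)$ (note that $H$ normalising $L_{\mu}(K^{\circ})$ is a priori much weaker than $H \le C_G(\mu(k^{*}))$). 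This independence of the relative notion from the ambient group is a genuine theorem, not a verification --- it is precisely the phenomenon recorded in \cite[Corollary 3.6]{Bate2011} and in the Remark following the proposition.

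The paper closes this gap by a move absent from your sketch: it passes to the quotient $\pi \colon N_G(K) \to N_G(K)/C_G(K)$ and invokes \cite[Theorem 1]{Gruchot2019}, which states that $H$ is relatively $G$-cr with respect to $K$ if and only if $\pi(H)$ is $\pi(N_G(K))$-cr. This converts the ambient-dependent relative condition into ordinary complete reducibility of $\pi(H)$ inside the reductive group $N_G(K)/C_G(K)$, to which Proposition~\ref{prop:G-cr-G0-cr} applies; Lemma~\ref{lem:equivariant}\ref{equivariant-ii} is then used for the non-degenerate $H$-equivariant map $K \to \pi(K)$ (whose connected kernel is the torus $Z(K)^{\circ}$), not for the quotient $K^{\circ} \to K^{\circ}/Z(K^{\circ})^{\circ}$ in isolation. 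Your proposed quotient of $K^{\circ}$ alone cannot do this job, since $H$ need not lie in $K^{\circ}$ and the relative condition cannot be phrased inside $K^{\circ}$ without first passing through $N_G(K)/C_G(K)$ or reproving the substance of \cite[Theorem 1]{Gruchot2019}. As written, your argument establishes only the easy comparisons and leaves the essential equivalence unjustified.
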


\begin{proof}
It is straightforward to see that $H$ is relatively $G$-cr with respect to $K$ if and only if $H$ is relatively $G$-cr with respect to $K^{\circ}$. By this and Proposition~\ref{prop:G-cr-G0-cr}, it suffices to assume that $K = K^{\circ}$.

Let $N = N_G(K)$, $C = C_G(K)$, and $\pi \colon N \to N/C$ be the natural quotient map. From \cite[Theorem 1]{Gruchot2019}, $H$ is relatively $G$-cr with respect to $K$ if and only if $\pi(H)$ is $\pi(N)$-cr. Now by Theorem~\ref{thm:generalisation}\ref{thm:nonconnected}, $\pi(H)$ is $\pi(N)$-cr if and only if the action of $\pi(H)$ on $\pi(N_G(K))$ is completely reducible. Now, the connected kernel of the induced map $K \to \pi(K)$ is $Z(K)^{\circ}$, a torus, and so by Lemma~\ref{lem:equivariant}\ref{equivariant-ii}, $H$ acts completely reducibly on $K$ if and only if $H$ (equivalently, $\pi(H)$) acts completely reducibly on $\pi(K)$. The result follows.
\end{proof}

\begin{remark} 
	Notice that in the setting of Proposition~\ref{prop:rel-G-cr}, the definition of a completely reducible action makes no reference to the ambient group $G$, essentially requiring only the fact that $G$ is an algebraic variety containing the group $HK$ as a closed subvariety. Thus the independence of relative complete reducibility from the ambient algebraic group indicated in \cite[Corollary 3.6]{Bate2011} becomes an intrinsic feature of the definition in this case.
\end{remark}

This feature is demonstrated even more prominently in the following 
geometric characterization of relative complete reducibility, where, thanks to Proposition~\ref{prop:rel-G-cr}, there is no longer any reference to an ambient reductive group $G$.

Let $H$ be a subgroup of $G$ and let $G\hookrightarrow\GL_m$
be an embedding of algebraic groups.
Then $\mathbf{h} \in H^n$ is
called a \emph{generic tuple of $H$ for the embedding
	$G\hookrightarrow\GL_m$} if $\mathbf{h}$ generates the
associative subalgebra of $\Mat_m$ spanned by $H$.
We call $\mathbf{h}\in H^n$ a \emph{generic tuple of $H$}
if it is a generic tuple of $H$ for some embedding $G\hookrightarrow\GL_m$,
\cite[Definition 5.4]{Bate2013}.

Relative complete reducibility has a natural characterisation in terms of $K$-orbits in $G^{n}$, where $K$ acts diagonally on $G^{n}$ by simultaneous conjugation: Let $\mathbf{h} \in G^{n}$ be a generic tuple for the subgroup $H$. Then $H$ is relatively $G$-cr with respect to $K$ if and only if the $K$-orbit $K \cdot \mathbf{h}$ is closed in $G^{n}$ \cite[Theorem 3.5(iii)]{Bate2011}. Therefore in the case that $H$ is also an algebraic group acting morphically on $K$, so that the semidirect product $H \ltimes K$ is again an algebraic group, embedding this into some large (arbitrary) reductive group $G$ gives the following.

\begin{corollary}
	Suppose that $H$ is a linear algebraic group acting morphically on a (not necessarily connected) reductive algebraic group $K$. Let $\mathbf{h} \in H^{n}$ be a generic tuple for $H$. Then $H$ acts completely reducibly on $K$ if and only if the $K$-orbit $K \cdot \mathbf{h}$ is Zariski closed as a subset of $(H \ltimes K)^{n}$.
\end{corollary}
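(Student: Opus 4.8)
The plan is to deduce this corollary directly from Proposition~\ref{prop:rel-G-cr} together with the orbit characterisation of relative complete reducibility recorded above (\cite[Theorem 3.5(iii)]{Bate2011}). The central idea is that the hypotheses allow us to manufacture a concrete ambient group into which Proposition~\ref{prop:rel-G-cr} can be applied. Since $H$ acts morphically on the reductive group $K$, the semidirect product $G \deq H \ltimes K$ is a linear algebraic group, and we may fix an embedding $G \hookrightarrow \GL_m$. Inside $G$, the subgroup $K$ is normal, so in particular $H \le N_G(K) = N_G(K^{\circ}) \cdot (\text{stuff})$; more precisely $H$ normalises $K^{\circ}$ since it normalises $K$. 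This places us squarely in the setting of Proposition~\ref{prop:rel-G-cr}, whose conclusion is that $H$ is relatively $G$-cr with respect to $K$ if and only if the conjugation action of $H$ on $K$ is completely reducible.

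Next I would invoke the orbit criterion. With $\mathbf{h} \in H^{n} \subseteq G^{n}$ a generic tuple for $H$, \cite[Theorem 3.5(iii)]{Bate2011} asserts that $H$ is relatively $G$-cr with respect to $K$ if and only if the orbit $K \cdot \mathbf{h}$ is closed in $G^{n}$. Chaining this with Proposition~\ref{prop:rel-G-cr} yields: the $H$-action on $K$ is completely reducible if and only if $K \cdot \mathbf{h}$ is closed in $G^{n} = (H \ltimes K)^{n}$. This is exactly the stated equivalence, so the proof amounts to assembling these two results and checking the hypotheses match.

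The main obstacle — really the only nontrivial point — is verifying that a tuple $\mathbf{h} \in H^{n}$ which is generic for $H$ relative to the chosen embedding $G \hookrightarrow \GL_m$ is the correct object to feed into \cite[Theorem 3.5(iii)]{Bate2011}, and that the notion of \emph{generic tuple for $H$} is consistent whether we regard $H$ abstractly or as a subgroup of $G = H \ltimes K$. By the definition recalled above, $\mathbf{h}$ generates the associative subalgebra of $\Mat_m$ spanned by $H$; since the relevant orbit map and closedness condition are governed by the associative closure of $H$ inside $\Mat_m$, the genericity condition transfers intact to the ambient $G$. One should also confirm that closedness of $K \cdot \mathbf{h}$ is insensitive to whether it is measured in $G^{n}$ or in $(H \ltimes K)^{n}$ — but these are literally the same variety under our identification $G = H \ltimes K$, so no comparison of topologies is needed.

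The remaining step is cosmetic: observe, as the preceding remark emphasises, that the ambient $\GL_m$ (and hence the choice of reductive $G$) has disappeared from the final statement, since complete reducibility of the $H$-action on $K$ is intrinsic to the pair $(H,K)$ by Proposition~\ref{prop:rel-G-cr}. Thus the corollary is a clean repackaging, and I would present it as such rather than reproving either ingredient.
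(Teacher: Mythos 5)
Your overall route is the paper's route --- form the semidirect product, apply Proposition~\ref{prop:rel-G-cr} together with the orbit criterion of \cite[Theorem 3.5(iii)]{Bate2011} --- but there is a genuine flaw in where you apply that machinery. You set $G := H \ltimes K$ and then invoke Proposition~\ref{prop:rel-G-cr} and the orbit criterion with this $G$ as the ambient group. Both of these results live in the standing setting of the paper (and of \cite{Bate2011}), where the ambient group $G$ is \emph{reductive}. Here $H$ is only assumed to be a linear algebraic group, so $H \ltimes K$ can have a nontrivial unipotent radical (take $H$ unipotent, for instance), and then neither Definition~\ref{def:relativeCR} as used in Proposition~\ref{prop:rel-G-cr} nor \cite[Theorem 3.5(iii)]{Bate2011} applies with $G = H \ltimes K$. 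Your closing remark that ``the choice of reductive $G$ has disappeared'' suggests you sensed this, but in the argument as written the only candidate for the ambient reductive group, namely the $\GL_m$ you fixed, is never actually used as such.

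The repair is exactly what the paper does: embed $H \ltimes K$ as a closed subgroup of some large reductive group $G$ (your chosen $\GL_m$ will do), and run the two ingredients with \emph{that} $G$ as the ambient group. Then $K$ is a reductive subgroup of $G$, $H$ normalises $K$ and hence $K^{\circ}$, and Proposition~\ref{prop:rel-G-cr} plus \cite[Theorem 3.5(iii)]{Bate2011} give that $H$ acts completely reducibly on $K$ if and only if $K \cdot \mathbf{h}$ is closed in $G^{n}$. Finally, since $(H \ltimes K)^{n}$ is a closed subvariety of $G^{n}$ containing $K \cdot \mathbf{h}$, closedness in $G^{n}$ is equivalent to closedness in $(H \ltimes K)^{n}$, which removes the auxiliary $G$ from the statement. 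This last comparison is a one-line observation, but it is needed; it is not the triviality you describe (``literally the same variety''), because in the corrected argument the orbit criterion is stated in $G^{n} = \GL_m^{\,n}$, not in $(H \ltimes K)^{n}$.
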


\subsection{Complete reducibility and Steinberg endomorphisms.} \label{sec:Sigma}
Let $G$ be connected and let 
$\sigma: G \rightarrow G$ be a 
\emph{Steinberg endomorphism} of $G$, i.e.\ 
a surjective endomorphism of $G$ 
that fixes only finitely many points, see \cite{Steinberg68} for a detailed discussion.
Steinberg endomorphisms of $G$ belong to the set of all 
isogenies $G\rightarrow G$ (see \cite[7.1(a)]{Steinberg68})
which encompasses in particular all (generalized) Frobenius endomorphisms, 
i.e.\ endomorphisms of $G$ some power of which are Frobenius endomorphisms 
corresponding to some $\mathbb{F}_q$-rational structure on $G$. 
We recall the notion of $\sigma$-complete reducibility from 
\cite{Herpel2011}.

\begin{definition}
	\label{def:sigma-cr}
	Let $\sigma$ be a Steinberg endomorphism of $G$ and let $H$ be a subgroup of $G$. We say that $H$ is 
	\emph{$\sigma$-completely reducible} (\emph{$\sigma$-cr} for short), provided that whenever 
	$H$ lies in a $\sigma$-stable parabolic subgroup 
	$P$ of $G$, it lies in a $\sigma$-stable Levi subgroup of $P$.
\end{definition}

The notions of $G$-complete reducibility, etc., can be extended to reductive groups defined
over arbitrary fields; see \cite{Serre2003-2004}, \cite[\S~5]{Bate2005}.
The concept in Definition \ref{def:sigma-cr} is motivated as follows: 
If $\sigma_q$ is a standard Frobenius morphism of $G$, 
then a subgroup $H$ of $G$ is defined over $\mathbb{F}_q$ if and only if it is $\sigma_q$-stable and if so, 
$H$ is $G$-completely reducible 
over $\mathbb{F}_q$ if and only if it is $\sigma_q$-completely reducible.
The following is the main theorem from \cite{Herpel2011}; it is a generalization of 
a special case of the rationality result 
\cite[Theorem~5.8]{Bate2005} to arbitrary
Steinberg endomorphisms of $G$.

\begin{theorem}[{\cite[Theorem 1.4]{Herpel2011}}]
	\label{thm:sigma-cr}
	Let $\sigma$ be a Steinberg endomorphism of $G$ 
	and let $H$ be a $\sigma$-stable subgroup of $G$. Then $H$ is $\sigma$-completely reducible if and only if $H$ is $G$-completely reducible.
\end{theorem}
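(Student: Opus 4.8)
The plan is to prove Theorem~\ref{thm:sigma-cr} by establishing both implications, relying throughout on the fact that over an algebraically closed field $G$-complete reducibility can be detected by the building and that $\sigma$ acts as a (not necessarily type-preserving) automorphism of $X(G)$. First I would record the easy direction: if $H$ is $G$-completely reducible and $H$ lies in a $\sigma$-stable parabolic $P$, then $H$ lies in some Levi subgroup $L$ of $P$, and I must upgrade $L$ to a $\sigma$-stable one. The natural move is to observe that both $L$ and $\sigma(L)$ are Levi subgroups of the $\sigma$-stable parabolic $P$, and to use the conjugacy of Levi subgroups of $P$ under $R_u(P)$ together with a fixed-point argument for the Steinberg endomorphism $\sigma$ acting on the variety $R_u(P)$ (the set of Levi subgroups of $P$ is a torsor under $R_u(P)$). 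Since $\sigma$ is a surjective endomorphism with finitely many fixed points on $G$, a Lang--Steinberg-type argument on the affine space $R_u(P)$ produces a $\sigma$-fixed point in the transporter, yielding a $\sigma$-stable Levi containing $H$.

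Next I would treat the harder direction: assuming $H$ is $\sigma$-completely reducible, I must show $H$ is $G$-completely reducible. Here the building-theoretic reformulation is the right weapon. Let $\Gamma$ be the subgroup of $\Aut(X(G))$ generated by $\sigma$ and the image of $H$ acting by conjugation, exactly as in Theorem~\ref{thm:generalisation}\ref{thm:gen-sigma}. The $\sigma$-complete reducibility hypothesis says precisely that every $\sigma$-stable, $H$-stable simplex of $X(G)$ (equivalently, every member of $X(G)^{\Gamma}$) that corresponds to a parabolic containing $H$ admits a $\sigma$-stable Levi, which one translates into the existence of an opposite in the $\Gamma$-fixed subset. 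Thus $\sigma$-complete reducibility of $H$ is equivalent, via Theorem~\ref{thm:generalisation}\ref{thm:gen-sigma}, to $\Gamma$ acting completely reducibly on $X(G)$. The task is then to pass from complete reducibility of the full group $\Gamma$ to complete reducibility of the subgroup generated by $H$ alone, i.e.\ to discard $\sigma$.

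The key structural input making this passage possible is that $\sigma$ is a \emph{surjective} endomorphism: this forces $X(G)^{\sigma}$ to be nonempty and, more importantly, guarantees that the subcomplex $X(G)^{\sigma}$ of $\sigma$-fixed simplices is itself a spherical building (or a building-like object on which opposition behaves well), as $\sigma$-fixed parabolics correspond to parabolic subgroups of the finite-type subgroup $G^{\sigma}$-data. I would argue that $X(G)^{\Gamma} = (X(G)^{\sigma})^{H}$, so that complete reducibility of the $\Gamma$-action is the same as complete reducibility of the induced $H$-action on the fixed building $X(G)^{\sigma}$; combined with the forward direction already established, one reads off that $H$ is $G$-completely reducible in $X(G)^{\sigma}$, and then a descent argument transfers this conclusion to the ambient building $X(G)$.

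The main obstacle I anticipate is precisely this last descent: showing that an opposite found inside the $\sigma$-fixed locus remains an opposite in the whole building, and conversely that $G$-complete reducibility is not lost when one enlarges from $\sigma$-stable parabolics to arbitrary ones. The subtlety is that $\sigma$ need not be type-preserving, so $X(G)^{\sigma}$ sits in the barycentric subdivision and its notion of opposition must be carefully matched against opposition in $X(G)$ itself. I expect to control this by invoking the Lang--Steinberg machinery once more to realise $\sigma$-stable parabolics as exactly the $\sigma$-rational points, so that opposition of $\sigma$-stable parabolics is inherited from opposition in $G$ via the intersection-in-a-common-Levi criterion, and then the equivalence in Theorem~\ref{thm:generalisation}\ref{thm:gen-sigma} closes the argument without further ado.
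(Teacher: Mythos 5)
The paper does not actually prove this theorem: it is imported verbatim from \cite[Theorem 1.4]{Herpel2011}, so your proposal must be measured against the argument given there. Your first direction ($G$-cr $\Rightarrow$ $\sigma$-cr) is right in outline but glosses over the one point that needs care: the set of \emph{all} Levi subgroups of the $\sigma$-stable parabolic $P$ is a torsor under $R_u(P)$, and a $\sigma$-fixed point of that torsor need not yield a Levi containing $H$. One must instead use that the Levi subgroups of $P$ \emph{containing $H$} form a single orbit under $C_{R_u(P)}(H)$ acting simply transitively (a conjugacy result from \cite{Bate2005}), and then apply Lang--Steinberg to that group, which requires checking it is connected and that $\sigma$ restricts to a Steinberg endomorphism of it. This is repairable and is essentially how \cite{Herpel2011} proceeds.

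The genuine gap is in the converse. Knowing that $H$ lies in a $\sigma$-stable Levi of every $\sigma$-stable parabolic containing it says nothing, a priori, about the parabolic subgroups containing $H$ that are \emph{not} $\sigma$-stable, and your proposed ``descent from $X(G)^{\sigma}$ to $X(G)$'' is precisely this unaddressed point. Neither the identification $X(G)^{\Gamma}=(X(G)^{\sigma})^{H}$ nor a further application of Lang--Steinberg helps: the latter only conjugates a given parabolic to a $\sigma$-stable one, not to a $\sigma$-stable one that still contains $H$, so it never produces an opposite for a non-$\sigma$-stable member of $X(G)^{H}$. The missing ingredient --- the actual engine of the proof in \cite{Herpel2011}, as in the rationality result \cite[Theorem 5.8]{Bate2005} --- is the Kempf--Rousseau--Hesselink optimality theory: if $H$ is not $G$-completely reducible, the optimal destabilizing parabolic $P(H)$ is canonically attached to $H$, hence is stabilised by $\sigma$ (and by $N_G(H)$), and contains $H$ in no Levi subgroup whatsoever; this single canonical, automatically $\sigma$-stable witness is what converts failure of $G$-complete reducibility into failure of $\sigma$-complete reducibility. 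Without some such canonicity statement your argument cannot close, and appealing to Theorem~\ref{thm:generalisation}\ref{thm:gen-sigma} merely restates the problem in building-theoretic language rather than solving it.
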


We now come to part \ref{thm:gen-sigma} of Theorem~\ref{thm:generalisation}. 

\begin{proposition}
	\label{prop:Sigma}
		Let $\sigma$ be a Steinberg endomorphism of the connected reductive group $G$ and let $H$ be a subgroup of $G$. Let $\Gamma$ be the subgroup of $\Aut(X(G))$ generated by $\sigma$ and the image of $H$. Then $H$ is $\sigma$-completely reducible if and only if $\Gamma$ acts completely reducibly on $G$.
\end{proposition}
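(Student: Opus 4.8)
The plan is to reduce the statement to Theorem~\ref{thm:sigma-cr} together with the building-theoretic reformulation already established in Proposition~\ref{prop:G-cr-G0-cr}. The key observation is that $\Gamma$ acts completely reducibly on $G$ if and only if the fixed-point subset $X(G^{\circ})^{\Gamma}$ is $X$-cr in the sense of Definition~\ref{def:cr-action}, and that a simplex $P \in X(G^{\circ})$ (equivalently, a parabolic subgroup of $G$) is $\Gamma$-fixed precisely when it is stabilised by every generator of $\Gamma$. Since $\Gamma$ is generated by $\sigma$ and the image of $H$ acting by conjugation, $P$ is $\Gamma$-fixed if and only if $P$ is $\sigma$-stable \emph{and} normalised by $H$. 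So the fixed-point set of $\Gamma$ is exactly the set of $\sigma$-stable parabolic subgroups that $H$ normalises.

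First I would make this identification of $X(G^{\circ})^{\Gamma}$ precise, and then invoke Proposition~\ref{prop:G-cr-G0-cr}: $\Gamma$ acts completely reducibly on $G$ if and only if every such $\Gamma$-fixed $P$ admits a $\Gamma$-fixed opposite $Q$. Two opposite parabolics meeting in a common Levi $L = P \cap Q$ means $L$ is a $\sigma$-stable, $H$-normalised Levi subgroup of $P$. The task is therefore to match ``$H$ normalises $P$ and $P$ has a $\sigma$-stable $H$-stable opposite'' with the defining condition of $\sigma$-complete reducibility, which speaks about $H$ being \emph{contained} in $P$ and in a $\sigma$-stable Levi of $P$.

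The main obstacle is bridging the gap between \emph{normalising} and being \emph{contained in}, exactly the subtlety flagged in the Remarks after Theorem~\ref{thm:generalisation}. For the direction assuming $\Gamma$-complete reducibility, I would start from the hypothesis that $H$ lies in a $\sigma$-stable parabolic $P$; then $H$ normalises $P$ and $P$ is $\Gamma$-fixed, so there is a $\Gamma$-fixed opposite $Q$ giving a $\sigma$-stable Levi $L$ which $H$ normalises. To upgrade ``$H$ normalises $L$'' to ``$H$ lies in $L$'' I expect to need that $H$ is $\sigma$-stable (so that Theorem~\ref{thm:sigma-cr} applies) and to combine the building-theoretic conclusion with $G$-complete reducibility via the Corollary to Proposition~\ref{prop:G-cr-G0-cr}. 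Concretely, I would argue that $\Gamma$-complete reducibility forces $H$ to be $G$-completely reducible, hence $\sigma$-completely reducible by Theorem~\ref{thm:sigma-cr}, and conversely that $\sigma$-complete reducibility of $H$ yields $G$-complete reducibility and the requisite $\sigma$-stable opposites.

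Conversely, assuming $H$ is $\sigma$-cr, Theorem~\ref{thm:sigma-cr} gives that $H$ is $G$-cr; I then need to verify that every $\Gamma$-fixed parabolic has a $\Gamma$-fixed opposite. Here the plan is to take a $\sigma$-stable, $H$-normalised $P$, use $G$-complete reducibility (in the normaliser formulation of the Corollary) to find an $H$-stable opposite, and then exploit $\sigma$-stability of $P$ and of $H$ to arrange that the opposite can be chosen $\sigma$-stable as well, appealing to the conjugacy of R-Levi subgroups by $R_u(P)$ and a rationality/$\sigma$-fixed-point argument in the spirit of \cite{Herpel2011}. The delicate point will be ensuring the \emph{same} opposite is simultaneously $H$-stable and $\sigma$-stable, i.e.\ genuinely $\Gamma$-fixed, rather than producing the two stabilities separately; I anticipate this is where the bulk of the work lies and where Theorem~\ref{thm:sigma-cr} does the essential heavy lifting.
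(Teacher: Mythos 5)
Your overall strategy --- identify $X(G^{\circ})^{\Gamma}$ with the $\sigma$-stable, $H$-normalised parabolics, apply Proposition~\ref{prop:G-cr-G0-cr}, and then bridge ``normalises'' and ``is contained in'' --- starts in the right place, but the bridge you propose has a genuine gap. You plan to route both directions through Theorem~\ref{thm:sigma-cr}, arguing that $\Gamma$-complete reducibility forces $H$ to be $G$-cr and hence $\sigma$-cr, and conversely. Two problems. First, Theorem~\ref{thm:sigma-cr} requires $H$ to be $\sigma$-stable, which is \emph{not} a hypothesis of Proposition~\ref{prop:Sigma}; you acknowledge needing this, but it is an extra assumption you cannot impose. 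Second, the step ``$\Gamma$ acts completely reducibly $\Rightarrow$ $H$ is $G$-completely reducible'' is unjustified: since $X^{\Gamma}\subseteq X^{H}$, complete reducibility of the smaller fixed-point set says nothing about a parabolic containing $H$ that is not $\sigma$-stable, so you cannot conclude anything about arbitrary parabolics over $H$. The paper avoids Theorem~\ref{thm:sigma-cr} entirely.

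The missing ideas are two elementary self-normalisation facts. For the forward direction: given $H\le P_{\lambda}$ with $P_{\lambda}$ $\sigma$-stable, $\Gamma$ stabilises $P_{\lambda}$, so Proposition~\ref{prop:G-cr-G0-cr} yields a $\Gamma$-stable R-Levi $L_{\mu}$ of $P_{\lambda}=P_{\mu}$; then $H\le P_{\mu}\cap N_G(L_{\mu})=L_{\mu}$ (this is \cite[Lemma~2.3]{Bate2015}), which upgrades ``normalises'' to ``contains'' with no appeal to $\sigma$-stability of $H$. For the converse: a $\Gamma$-stable parabolic $P$ is $\sigma$-stable and $H$-normalised, and since $G$ is connected $H\le N_G(P)=P$; now the definition of $\sigma$-complete reducibility applies \emph{directly} to give a single $\sigma$-stable Levi $L$ of $P$ containing $H$. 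Containment already gives $H$-stability of $L$, so $L$ is $\Gamma$-stable and furnishes the $\Gamma$-fixed opposite. The ``delicate simultaneity'' you anticipate (arranging one opposite that is both $H$-stable and $\sigma$-stable) therefore dissolves: the $\sigma$-cr hypothesis hands you one Levi with both properties at once, and no rationality or fixed-point argument in the spirit of \cite{Herpel2011} is needed.
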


\begin{proof}
	By definition of $\Gamma$, the fixed-point subcomplex $X(G)^{\Gamma}$ consists of those $\sigma$-stable parabolic subgroups of $G$ containing $H$. Thus the statement that every member of $X(G)^\Gamma$ has an opposite in $X(G)^\Gamma$ is trivially equivalent to the statement that whenever $H$ lies in a $\sigma$-stable parabolic subgroup $P$ of $G$, it lies in a $\sigma$-stable parabolic subgroup $P^{\rm op}$ opposite to $P$; when this holds, $H$ is contained in the intersection of $P$ and $P^{\rm op}$, which is a $\sigma$-stable Levi subgroup of $P$.
	
	Conversely, suppose that whenever $H$ is contained in a $\sigma$-stable parabolic subgroup $P$ of $G$, it is contained in a $\sigma$-stable Levi subgroup $L$ of $P$. Being contained in $L$ implies that $H$ lies in the unique parabolic subgroup $Q$ of $G$ whose intersection with $P$ is $L$. But since $P$ and $L$ are $\sigma$-stable, the image of $Q$ under $\sigma$ is again a parabolic subgroup of $G$ whose intersection with $P$ is $L$; uniqueness now implies that $Q$ is $\sigma$-stable. Thus every member of $X(G)^\Gamma$ has an opposite in $X(G)^\Gamma$.	
\end{proof}

\subsection{Complete reducibility for finite groups of Lie type.} \label{sec:GsigmaCR}

Suppose $G$ is connected reductive and defined over a finite field, hence equipped with a 
Steinberg endomorphism $\sigma$.  
The fixed point subgroup $G_\sigma :=\{g \in G \mid \sigma(g) =g\}$ of $\sigma$
is thus a \emph{finite group of Lie type}.
Likewise, for a $\sigma$-stable subgroup $M$  of $G$, let $M_\sigma := M \cap G_\sigma$ be its fixed point subgroup.
As a further variant of the concepts above, 
it is natural to consider the following notion 
of complete reducibility for the finite reductive groups $G_\sigma$.

\begin{definition}
	\label{def:Gsigma-cr}
	Let $\sigma$ be a Steinberg endomorphism of $G$ and let $H$ be a subgroup of $G_\sigma$.
	Then $H$ is \emph{$G_\sigma$-completely reducible} (\emph{$G_\sigma$-cr} for short) provided if $H \le P_\sigma$ for some  $\sigma$-stable parabolic subgroup $P$ of $G$, then $H \le L_\sigma$ for some  $\sigma$-stable Levi subgroup $L$ of $P$.
\end{definition}

An easy application of Theorem \ref{thm:sigma-cr} gives that 
the notion of 
$G_\sigma$-complete reducibility is already captured by the usual concept in the ambient reductive group $G$.

\begin{theorem}
		\label{thm:Gsigma-cr}
		Let $\sigma$ be a Steinberg endomorphism of $G$ and let $H$ be a subgroup of $G_\sigma$.
		Then $H$ is $G_\sigma$-completely reducible if and only if $H$ is $G$-completely reducible.
\end{theorem}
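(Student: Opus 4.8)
The plan is to deduce this directly from Theorem~\ref{thm:sigma-cr} by showing that, for a subgroup $H$ of $G_\sigma$, the notion of $G_\sigma$-complete reducibility (Definition~\ref{def:Gsigma-cr}) coincides with that of $\sigma$-complete reducibility (Definition~\ref{def:sigma-cr}). The crucial preliminary point is that, since $H \le G_\sigma$, every element of $H$ is fixed by $\sigma$, so $H$ is automatically $\sigma$-stable; this is precisely the hypothesis required to invoke Theorem~\ref{thm:sigma-cr}.

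First I would record the elementary observation that for \emph{any} $\sigma$-stable subgroup $M$ of $G$ one has $H \le M$ if and only if $H \le M_\sigma$. The reverse implication is trivial, and for the forward implication, if $H \le M$ then, since also $H \le G_\sigma$, we obtain $H \le M \cap G_\sigma = M_\sigma$. Applying this with $M = P$ a $\sigma$-stable parabolic subgroup, and with $M = L$ a $\sigma$-stable Levi subgroup of $P$, shows that the containment $H \le P_\sigma$ is equivalent to $H \le P$, and likewise $H \le L_\sigma$ is equivalent to $H \le L$.

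Consequently the two definitions agree on $H$. The hypothesis ``$H \le P_\sigma$ for some $\sigma$-stable parabolic $P$'' of Definition~\ref{def:Gsigma-cr} matches the hypothesis ``$H$ lies in a $\sigma$-stable parabolic subgroup $P$'' of Definition~\ref{def:sigma-cr}, and the respective conclusions ``$H \le L_\sigma$'' and ``$H \le L$'' match in the same way. Hence $H$ is $G_\sigma$-cr if and only if $H$ is $\sigma$-cr. Since $H$ is $\sigma$-stable, Theorem~\ref{thm:sigma-cr} then gives that $H$ is $\sigma$-cr if and only if $H$ is $G$-cr, and chaining these equivalences yields the claim.

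I do not anticipate any genuine obstacle: this is indeed the ``easy application'' advertised, the whole content being the containment-versus-fixed-point comparison of the previous paragraph. The only point requiring care is to state that comparison precisely, as it is exactly what bridges the finite-group condition of Definition~\ref{def:Gsigma-cr} with the algebraic-group condition of Definition~\ref{def:sigma-cr}; everything else is formal.
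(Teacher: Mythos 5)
Your proposal is correct and follows essentially the same route as the paper: both arguments reduce the statement to Theorem~\ref{thm:sigma-cr} via the observation that, for $H \le G_\sigma$ and a $\sigma$-stable subgroup $M$, the containments $H \le M$ and $H \le M_\sigma = M \cap G_\sigma$ are equivalent. The paper phrases this as two separate implications (the converse by contraposition) rather than first identifying Definitions~\ref{def:sigma-cr} and~\ref{def:Gsigma-cr} outright, but the content is identical.
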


\begin{proof}
	If $H$ is $G$-cr and $H \le P_\sigma$ for some  $\sigma$-stable parabolic subgroup $P$ of $G$, then by Theorem \ref{thm:sigma-cr}, there is a $\sigma$-stable Levi subgroup $L$ of $P$ containing $H$. Thus $H \le L \cap G_\sigma = L_\sigma$.
	
	If $H$ is not $G$-cr, then by Theorem \ref{thm:sigma-cr}, it is not $\sigma$-cr.
	Thus there is a (proper) $\sigma$-stable parabolic subgroup $P$ of $G$ containing $H$ but no $\sigma$-stable Levi subgroup of $P$ contains $H$. In particular, $H \le P_\sigma$, but $H$ does not lie in $L_\sigma$ for any $\sigma$-stable Levi subgroup $L$ of $P$.	
\end{proof}

Theorem~\ref{thm:Gsigma-cr} allows us to derive statements for $G_\sigma$-complete reducibility from 
corresponding ones for the ambient reductive group $G$. Note that the concept of $G_\sigma$-complete reducibility relies not just on the group structure of $G_\sigma$, but also on information on the embedding $G_\sigma \to G$. For instance, the
isomorphism $\PSL_2(4) \cong \PSL_2(5)$, respectively $\PSL_2(7) \cong \PSL_3(2)$,
 leads to two different collections of `completely reducible' subgroups depending on whether we consider it as a group in characteristic $2$ or $5$, respectively $7$ or $2$.

In our next example, instances of finite subgroups of $G$ readily lead to (non) $G_\sigma$-completely reducible subgroups of $G_\sigma$.
\begin{example}
	\label{ex:g2}
	Suppose $k$ is of characteristic $2$. Let $G$ be a simple algebraic group of type $G_2$ over $k$. Let $M$ be a maximal rank subgroup of type $\tilde A_1 A_1$. In 
	\cite[\S 7]{Bate2010} a family of finite subgroups $H_a$ of $M$ was constructed for $a \in k^*$ with the property that $H_a$ is $G$-cr but not $M$-cr, \cite[Proposition 7.17]{Bate2010}. Since $H_a$ is isomorphic to the finite symmetric group $S_3$ and a fixed $a \in k^*$ belongs to $\mathbb {F}_q$ for some suitable power $q$ of $2$, we see that $H_a$ belongs to 
	$M_{\sigma}$, where $\sigma$ denotes the standard Frobenius endomorphism of $G$ associated with the $\mathbb {F}_q$-structure of $G$. It follows from Theorem~\ref{thm:Gsigma-cr} that $H_a$ is $G_{\sigma}$-cr but not $M_{\sigma}$-cr for $a \in k^*$.   
\end{example}

Our next result gives a general Clifford Theorem for finite groups of Lie type.
Its proof is immediate from Theorem~\ref{thm:Gsigma-cr} and \cite[Theorem 3.10]{Bate2005}.
 
 \begin{corollary}
\label{cor:Gsigma-cr-normal}
Let $\sigma$ be a Steinberg endomorphism of $G$ and let $H$ be a subgroup of $G_\sigma$ and $N$ a normal subgroup of $H$.
If $H$ is $G_\sigma$-completely reducible, then so is $N$.
 \end{corollary}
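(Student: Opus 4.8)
The plan is to reduce the statement for finite groups of Lie type to the corresponding statement for the ambient reductive group $G$, using Theorem~\ref{thm:Gsigma-cr} as the bridge, and then invoke the known Clifford-type result \cite[Theorem 3.10]{Bate2005}, which asserts that $G$-complete reducibility is inherited by normal subgroups. Concretely, I would argue as follows. Suppose $H \le G_\sigma$ is $G_\sigma$-completely reducible and let $N \trianglelefteq H$. By Theorem~\ref{thm:Gsigma-cr}, the hypothesis that $H$ is $G_\sigma$-cr is equivalent to $H$ being $G$-completely reducible. Now $N$ is a normal subgroup of a $G$-cr group, so \cite[Theorem 3.10]{Bate2005} immediately yields that $N$ is itself $G$-completely reducible.

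To conclude, I would translate back: $N$ is a subgroup of $H \le G_\sigma$, so in particular $N \le G_\sigma$, and hence Theorem~\ref{thm:Gsigma-cr} applies to $N$ as well. Applying that theorem in the reverse direction converts the $G$-complete reducibility of $N$ into $G_\sigma$-complete reducibility of $N$, which is exactly the desired conclusion.

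The only point requiring any care is ensuring that the hypotheses of \cite[Theorem 3.10]{Bate2005} and of Theorem~\ref{thm:Gsigma-cr} are genuinely met at each step: namely, that $N$ really is a subgroup of $G_\sigma$ (clear, since $N \le H \le G_\sigma$) and that the Clifford-theoretic input is stated for possibly non-connected $G$ and arbitrary normal subgroups, not merely closed or connected ones. Since $N$ need not be closed and $G$ need not be connected, I would confirm that \cite[Theorem 3.10]{Bate2005} is formulated in sufficient generality to cover finite (hence closed) subgroups of a general reductive $G$; this is the main, though minor, obstacle, and the excerpt already advertises the proof as ``immediate'' from these two ingredients, so no substantial new argument is needed. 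The whole proof is therefore a two-line application of Theorem~\ref{thm:Gsigma-cr} sandwiching \cite[Theorem 3.10]{Bate2005}.
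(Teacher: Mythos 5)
Your proposal is correct and is exactly the paper's argument: the authors state that the proof is immediate from Theorem~\ref{thm:Gsigma-cr} together with \cite[Theorem 3.10]{Bate2005}, i.e.\ translate $G_\sigma$-complete reducibility to $G$-complete reducibility, apply the Clifford-type theorem for normal subgroups, and translate back. Your caveat about checking the generality of \cite[Theorem 3.10]{Bate2005} is sensible but unproblematic, since that result holds for arbitrary (abstract) normal subgroups of subgroups of a possibly non-connected reductive $G$.
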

 
 In particular, it follows from Corollary \ref{cor:Gsigma-cr-normal} that if $H$ is a subgroup of $G_\sigma$ so that $N_{G_\sigma}(H)$ is $G_\sigma$-cr, then so is $H$. The converse is less clear.

Let $\sigma$ and $\tau$ be Steinberg endomorphisms of $G$ so that $G_\sigma \subseteq G_\tau$.
Let $H$ be a subgroup of $G_\sigma$. Then by Theorem \ref{thm:Gsigma-cr}, $H$ is $G_\sigma$-cr if and only if it is $G_\tau$-cr. 

\begin{theorem}
	\label{thm:Gsigma-cr-normaliser}
	Let $\sigma$ be a Steinberg endomorphism of $G$ and let $H$ be a subgroup of $G_\sigma$. Then there exists a Steinberg endomorphism $\tau$  of $G$ with $G_\sigma \subseteq G_\tau$ such that $H$ is $G_\sigma$-completely reducible if and only if $N_{G_\tau}(H)$ is $G_\tau$-completely reducible.
\end{theorem}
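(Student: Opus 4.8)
The plan is to transport the entire statement from the finite groups $G_\sigma$ and $G_\tau$ into the ambient connected group $G$ by means of Theorem~\ref{thm:Gsigma-cr}, and then to exploit the freedom in the choice of $\tau$ to make the finite normaliser $N_{G_\tau}(H)$ large enough to detect complete reducibility. Set $M \deq N_G(H)$. Since $H \le G_\sigma$ is fixed pointwise by $\sigma$, the closed subgroup $M$ is $\sigma$-stable, and for any Steinberg endomorphism $\tau$ with $H \le G_\tau$ one has $N_{G_\tau}(H) = M \cap G_\tau = M_\tau$. I would take $\tau = \sigma^n$ for a suitable $n \ge 1$; then $G_\sigma \subseteq G_{\sigma^n} = G_\tau$ holds automatically, and as noted just before the theorem $H$ is $G_\sigma$-cr if and only if it is $G_\tau$-cr, if and only if it is $G$-cr. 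Thus it remains to choose $n$ so that, for this single $\tau$, the subgroup $M_\tau = N_{G_\tau}(H)$ is $G_\tau$-cr precisely when $H$ is $G$-cr.

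One implication requires no special choice of $\tau$. If $N_{G_\tau}(H)$ is $G_\tau$-cr then, since $H$ is normal in $N_{G_\tau}(H)$, Corollary~\ref{cor:Gsigma-cr-normal} shows $H$ is $G_\tau$-cr, hence $G_\sigma$-cr. For the reverse implication the key algebraic input is the group-theoretic fact that, in the connected group $G$, the subgroup $H$ is $G$-cr if and only if $M = N_G(H)$ is $G$-cr. The nontrivial direction, that $H$ being $G$-cr forces $N_G(H)$ to be $G$-cr, I would establish exactly as in Proposition~\ref{prop:Sigma}: if $N_G(H)$ lies in an R-parabolic $P$, then so does $H$, and since $H$ is $G$-cr there is an R-Levi $L$ of $P$ containing $H$; because $N_G(H)$ normalises both $H$ and $P$ it permutes the R-Levi subgroups of $P$ containing $H$, and the argument underlying \cite[Lemma~5.1]{Bate2010} produces one such $L$ normalised by all of $N_G(H)$. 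Then $N_G(H) \le N_G(L) \cap P = L$ by \cite[Lemma~2.3]{Bate2015}, so $N_G(H)$ is $G$-cr.

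It then remains to choose $n$ so that $M_\tau = M_{\sigma^n}$ is Zariski dense in $M = N_G(H)$; granting this, $\overline{M_\tau} = M$, and since $H \le P_\lambda$ if and only if $\overline{H} \le P_\lambda$, and likewise for R-Levi subgroups, $G$-complete reducibility depends only on the Zariski closure, so $M_\tau$ is $G$-cr if and only if $M$ is $G$-cr, which by the previous paragraph happens if and only if $H$ is $G$-cr. Combined with Theorem~\ref{thm:Gsigma-cr} this yields the desired equivalence for $\tau = \sigma^n$. To obtain the density I would take $n$ divisible by the order of the action of $\sigma$ on the finite group $M/M\conn$, so that every connected component of $M$ is $\sigma^n$-stable; on $M\conn$ the restriction of $\sigma^n$ is a Steinberg endomorphism of a connected group, so by Lang--Steinberg each component contains a $\sigma^n$-fixed point and meets $M_\tau$ in a coset of the Zariski-dense fixed-point subgroup $(M\conn)_{\sigma^n}$, whence $M_\tau$ is dense in $M$. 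The main obstacle is precisely this last step: one must simultaneously keep $G_\sigma \subseteq G_\tau$ when passing from $\sigma$ to $\sigma^n$, make every component of the possibly disconnected group $N_G(H)$ rational, and verify that the restriction of $\sigma^n$ to $N_G(H)\conn$ really is a Steinberg endomorphism to which Lang--Steinberg applies.
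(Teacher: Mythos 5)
Your overall architecture matches the paper's: the converse direction via Corollary~\ref{cor:Gsigma-cr-normal} is exactly the paper's argument, and for the forward direction the paper likewise passes to $G$-complete reducibility via Theorem~\ref{thm:Gsigma-cr} and uses the fact that $N_G(H)$ is $G$-cr when $H$ is (the paper cites \cite[Corollary 3.16]{Bate2005} rather than re-deriving it, but that is immaterial). The problem is your final step. The claim that $\tau = \sigma^n$ can be chosen so that $M_\tau = N_{G_\tau}(H)$ is Zariski dense in $M = N_G(H)$ is false whenever $N_G(H)$ has positive dimension: by definition a Steinberg endomorphism has \emph{finite} fixed-point group, so $M_\tau \subseteq G_\tau$ is a finite set and cannot be dense in $M$. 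What Lang--Steinberg gives is that each $\sigma^n$-stable component of $M$ contains a $\sigma^n$-fixed point and that the \emph{union} $\bigcup_n M_{\sigma^n}$ is dense in $M$; for any single $n$ the group $(M^{\circ})_{\sigma^n}$ is finite, not Zariski dense. Hence the reduction ``$M_\tau$ is $G$-cr iff $\overline{M_\tau} = M$ is $G$-cr'' collapses, and with it your forward implication.

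The correct substitute --- and the device the paper actually uses --- is \cite[Lemma~2.10]{Bate2005}: there is a finitely generated subgroup $\Gamma \le N_G(H)$ lying in \emph{exactly the same} R-parabolic and R-Levi subgroups of $G$ as $N_G(H)$. Over $\overline{\mathbb{F}_p}$ a finitely generated linear group is finite, so $\Gamma \le G_\tau$ for some Steinberg endomorphism $\tau$, which one may take with $G_\sigma \subseteq G_\tau$. Density is not needed: if $N_{G_\tau}(H) \le P_\tau$ then $\Gamma \le P$, and a $\tau$-stable Levi $L$ of $P$ with $\Gamma \le L_\tau$ (which exists because $\Gamma$, being $G$-cr, is $G_\tau$-cr by Theorem~\ref{thm:Gsigma-cr}) automatically contains all of $N_G(H)$ by the defining property of $\Gamma$, whence $N_{G_\tau}(H) \le L_\tau$. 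Containment in the same parabolics and Levis is a much weaker --- and attainable --- property than Zariski density, and it is precisely what the argument requires. If you replace your density step by this lemma, the rest of your proof goes through.
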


\begin{proof}
	Suppose that $H$ is $G_\sigma$-cr. Then by Theorem \ref{thm:Gsigma-cr} and \cite[Corollary 3.16]{Bate2005}, $N_G(H)$ is $G$-cr. Thanks to \cite[Lemmma 2.10]{Bate2005}, there is a finitely generated subgroup $\Gamma$ of $N_G(H)$ with the property that $\Gamma$ lies in the very same parabolic and Levi subgroups of $G$ as $N_G(H)$. 
	In particular, since $N_G(H)$ is $G$-cr, so is $\Gamma$. As a finitely generated subgroup of $G$, $\Gamma$ is finite. Thus $\Gamma \le G_\tau$ for some Steinberg endomorphism $\tau$ of $G$. Without loss we may also assume that $G_\sigma \subseteq G_\tau$. 
	
	Since $\Gamma \le G_\tau$ is $G$-cr, it is $G_\tau$-cr, by Theorem \ref{thm:Gsigma-cr}.
	
	Now let $N_{G_\tau}(H) = N_G(H) \cap G_\tau$ belong to $P_\tau$ for some $\tau$-stable parabolic subgroup $P$ of $G$. Then $\Gamma \le N_G(H) \cap G_\tau  = N_{G_\tau}(H)  \le P_\tau \le P$.
	Since $\Gamma$ is $G_\tau$-cr, there is a $\tau$-stable Levi $L$ of $P$ so that $\Gamma \le L_\tau \le L$. The properties of $\Gamma$ imply that $N_{G_\tau}(H) = N_G(H) \cap G_\tau  \le  L \cap G_\tau = L_\tau$. Consequently, $N_{G_\tau}(H)$ is $G_\tau$-cr.
	
	Conversely, if $N_{G_\tau}(H)$ is $G_\tau$-cr, then $H$ is $G_\tau$-cr, by Corollary \ref{cor:Gsigma-cr-normal}. Finally, $H$ is $G_\sigma$-cr by the comment above.
\end{proof}

The following is an immediate consequence of Corollary \ref{cor:Gsigma-cr-normal} and Theorem \ref{thm:Gsigma-cr-normaliser}.

\begin{corollary}
	\label{cor:Gsigma-cr-centraliser}
	Let $\sigma$ be a Steinberg endomorphism of $G$ and let $H$ be a subgroup of $G_\sigma$. Then there exists a Steinberg endomorphism $\tau$  of $G$ with $G_\sigma \subseteq G_\tau$ such that if $H$ is $G_\sigma$-completely reducible, then $C_{G_\tau}(H)$ is $G_\tau$-completely reducible.
\end{corollary}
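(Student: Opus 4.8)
The plan is to combine the normaliser statement of Theorem~\ref{thm:Gsigma-cr-normaliser} with the normal-subgroup inheritance of Corollary~\ref{cor:Gsigma-cr-normal}, using only the elementary fact that a centraliser is a normal subgroup of the associated normaliser. First I would invoke Theorem~\ref{thm:Gsigma-cr-normaliser} to produce a Steinberg endomorphism $\tau$ of $G$ with $G_\sigma \subseteq G_\tau$ such that $H$ is $G_\sigma$-cr if and only if $N_{G_\tau}(H)$ is $G_\tau$-cr. This $\tau$ is the endomorphism asserted in the statement, and I would carry it through the remainder of the argument. Assuming $H$ is $G_\sigma$-cr, the theorem then yields that $N_{G_\tau}(H)$ is $G_\tau$-cr.

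Next I would record that $C_{G_\tau}(H) \trianglelefteq N_{G_\tau}(H)$: for $g \in N_{G_\tau}(H)$ and $c \in C_{G_\tau}(H)$, conjugation by $g$ maps $C_{G_\tau}(H)$ into itself, since $g$ permutes the elements of $H$ and so $g c g^{-1}$ again centralises every element of $H$. Applying Corollary~\ref{cor:Gsigma-cr-normal} with $G_\tau$ in the role of the ambient finite group of Lie type, with $N_{G_\tau}(H)$ in the role of the $G_\tau$-cr subgroup, and with $C_{G_\tau}(H)$ in the role of its normal subgroup, I conclude that $C_{G_\tau}(H)$ is $G_\tau$-cr, which is exactly what is claimed.

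I do not expect any genuine obstacle: the corollary is a formal consequence of the two cited results, and the only point requiring checking is the routine group-theoretic fact that $C_{G_\tau}(H) \trianglelefteq N_{G_\tau}(H)$. The one subtlety worth flagging is that the endomorphism $\tau$ supplied by Theorem~\ref{thm:Gsigma-cr-normaliser} depends on $H$, so the existential quantifier in the statement is unavoidable; accordingly the proof should emphasise that the same $\tau$ is used in both invocations, and that the implication is one-directional (from $G_\sigma$-cr of $H$ to $G_\tau$-cr of $C_{G_\tau}(H)$).
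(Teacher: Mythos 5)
Your proposal is correct and is precisely the argument the paper intends: the authors state the corollary is an immediate consequence of Theorem~\ref{thm:Gsigma-cr-normaliser} and Corollary~\ref{cor:Gsigma-cr-normal}, and your write-up simply makes explicit the routine fact that $C_{G_\tau}(H)\trianglelefteq N_{G_\tau}(H)$ and that the same $\tau$ is used throughout.
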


\subsection{Complete reducibility and building automorphisms.} \label{sec:A} 

We present some complements to our main development.
Let $G$ be connected reductive and let 
$\Sigma$ be a group of building automorphisms of the building $X(G)$ of $G$ (not necessarily type-preserving). 
If one restricts attention to $\Sigma$-stable parabolic subgroups and their $\Sigma$-stable opposites, 
one obtains the following more general notion. 

\begin{definition} \label{def:acr}
	Let $H$ be a subgroup of the connected reductive algebraic group $G$, and let $\Sigma$ be a group of building automorphisms of $X(G)$. We say that $H$ is \emph{$\Sigma$-completely reducible} (\emph{$\Sigma$-cr} for short) if, whenever $H$ is contained in an $\Sigma$-stable parabolic subgroup $P$ of $G$, it is contained in a $\Sigma$-stable parabolic opposite to $P$.
	\end{definition}

Note that Definition \ref{def:sigma-cr} is just 
the special case $\Sigma = \left<\sigma\right>$ for a Steinberg endomorphism $\sigma$ of $G$ in Definition \ref{def:acr}.

Then incursions into this concept may be made 
analogous to the ones from Section  \ref{sec:Sigma}.
We indicate the counterpart of Proposition \ref{prop:Sigma}, which shows that Definition \ref{def:acr} is equivalent to  
$X(G^{\circ})^\Gamma$ being $X$-cr, where $\Gamma$ is the subgroup of $\Aut(X(G^{\circ}))$ generated by $\Sigma$ and the image of the subgroup $H$ of $G$ in question.
The proof of the latter applies \emph{mutatis mutandis}
arguing via $\Sigma$-stable opposite parabolic subgroups in place of $\sigma$-stable Levi subgroups and 
is left to the reader.

\begin{proposition} \label{prop:A}
	Let $G$ be connected reductive and let 
	$H$ be a subgroup of $G$, let $\Sigma \le \Aut(X(G))$ and let $\Gamma$ be the subgroup of $\Aut(X(G))$ generated by $\Sigma$ and the image of $H$. Then $H$ is $\Sigma$-completely reducible if and only if $\Gamma$ acts completely reducibly on $X(G)$.
\end{proposition}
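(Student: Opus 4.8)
The plan is to mirror the proof of Proposition~\ref{prop:Sigma}, but to exploit the fact that Definition~\ref{def:acr} is already phrased in terms of $\Sigma$-stable \emph{opposite parabolic subgroups} --- precisely the data governing complete reducibility of an action in the sense of Definition~\ref{def:cr-action}. Consequently the argument should be cleaner than that of Proposition~\ref{prop:Sigma}: there is no need to pass between parabolic and Levi subgroups through cocharacters, since the opposite parabolic is handed to us directly.

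The first step is to record the dictionary between the group-theoretic condition ``$H \le P$'' and the building-theoretic condition ``the image of $H$ fixes the simplex $P$''. Since $G$ is connected, every parabolic subgroup is self-normalising, so for $g \in G$ the induced automorphism of $X(G)$ fixes $P$ if and only if $g \in N_G(P) = P$. Hence the image of $H$ fixes $P$ if and only if $H \le P$. As $\Gamma$ is generated by $\Sigma$ together with the image of $H$, a parabolic $P$ lies in $X(G)^{\Gamma}$ if and only if $P$ is $\Sigma$-stable and $H \le P$. This equivalence drives both implications.

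For the forward direction, assume $\Gamma$ acts completely reducibly on $X(G)$, i.e.\ $X(G)^{\Gamma}$ is $X$-cr, and let $P$ be a $\Sigma$-stable parabolic containing $H$. By the dictionary, $P \in X(G)^{\Gamma}$, so there is a $\Gamma$-fixed parabolic $Q$ opposite to $P$; being $\Gamma$-fixed, $Q$ is $\Sigma$-stable and contains $H$, which is exactly what $\Sigma$-complete reducibility requires. For the converse, assume $H$ is $\Sigma$-cr and let $P \in X(G)^{\Gamma}$; then $P$ is $\Sigma$-stable with $H \le P$, so $\Sigma$-complete reducibility yields a $\Sigma$-stable opposite $Q$ with $H \le Q$, and the dictionary places $Q$ in $X(G)^{\Gamma}$. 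Thus every member of the (automatically convex) fixed-point set $X(G)^{\Gamma}$ has an opposite inside it, so the action is completely reducible.

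The only points needing care --- and the sole places connectedness is used --- are the self-normalising property of parabolics underpinning the dictionary above, and the compatibility of building-theoretic opposition in $X(G)$ with the group-theoretic opposition (``$P \cap Q$ a common R-Levi'') appearing in Definition~\ref{def:acr}; the latter is the identification already recorded in the introduction. I do not expect any genuine obstacle: in contrast to Proposition~\ref{prop:Sigma}, there is no step in which a $\Sigma$-stable Levi must be intersected with $P$ to manufacture the required opposite, since that opposite is already provided by the definition of $\Sigma$-complete reducibility.
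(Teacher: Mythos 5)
Your proposal is correct and follows essentially the same route as the paper's own (very short) proof: both directions reduce to the observation that, since parabolics of the connected group $G$ are self-normalising, a parabolic $P$ lies in $X(G)^{\Gamma}$ precisely when it is $\Sigma$-stable and contains $H$, after which Definition~\ref{def:acr} and Definition~\ref{def:cr-action} match up term by term. Your write-up merely makes explicit the self-normalising ``dictionary'' that the paper leaves implicit.
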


\begin{proof}
	Suppose $\Gamma$ acts completely reducibly on $X(G)$, and suppose $H \le P$, a $\Sigma$-stable parabolic subgroup of $G$. Then $P$ is $\Gamma$-stable. So by hypothesis, $\Gamma$ fixes an opposite $Q$ of $P$. In particular, $Q$ is $\Sigma$-stable. 
	
	Conversely, suppose that $H$ is $\Sigma$-cr. 
	Let $P \in X(G)^{\Gamma}$. So $P$ is $\Sigma$-stable. Then by hypothesis there is a $\Sigma$-stable opposite parabolic $Q$ in $X(G)$ containing $H$. Thus $Q$ is $\Gamma$-stable. 
\end{proof}

\subsection{Complete reducibility for arbitrary \texorpdfstring{$k$}{k}.} \label{sec:rat} 

If $G$ is defined over an arbitrary field $k$, then the collection of $k$-defined parabolic subgroups of $G^{\circ}$ also gives a spherical building (cf.~\cite[\S 5]{Tits74}), which we denote by $X_k(G^{\circ})$. 
If $k$ is algebraically closed, then we set
$X(G^{\circ}) = X_{k}(G^{\circ})$.

In \cite[\S 5]{Tits74}, Tits showed that for $G = G^{\circ}$, if $X_{k}(G)$ is irreducible and of rank at least 2 then every automorphism of $X_{k}(G)$ arises via some natural constructions involving the algebraic group $G$: the building automorphisms induced by isogenies of $G$ and by field automorphisms of $k$.

The last part of 
Definition \ref{def:cr-action} extends naturally to this rational setting.

\begin{definition}[{\cite[\S 2]{Serre2003-2004}}] \label{def:cr-action-rat} 
	Suppose $G$ is defined over an arbitrary field $k$. Let $X = X_{k}(G^{\circ})$ be the spherical building of $G^{\circ}$. Then an action of $\Gamma$ on $G$ by $k$-automorphisms is called \emph{completely reducible} if the induced action on $X$ is completely reducible
	(in the sense of Definition \ref{def:cr-action}).
\end{definition}

Let $X_{k}(G)$ be the set of R-parabolic $k$-subgroups of $G$. Note that by assumption $\Gamma$ acts on $X_{k}(G)$.
For the rational counterpart of Definition~\ref{def:relativeCR}, we replace the R-parabolic and R-Levi subgroups by $k$-defined R-parabolic and $k$-defined R-Levi subgroups, see \cite[Definition 4.1]{Bate2011}.
We obtain the following rational counterparts of Theorem \ref{thm:generalisation}\ref{thm:nonconnected} and \ref{thm:gen-rel}.

\begin{theorem} \label{thm:generalisation-rat} %
	Let $G$ be a (possibly non-connected) reductive algebraic group defined over~$k$.
	\begin{enumerate}[label=(\roman*)]
		\item The action of an abstract group $\Gamma$ by means of $k$-automorphisms on $G$ is completely reducible in the sense of Definition~\ref{def:cr-action-rat} if and only if every member of $X_{k}(G)^{\Gamma}$ has an opposite in $X_{k}(G)^\Gamma$. \label{thm:nonconnected-rat}
		\item Let $K$ be a $k$-defined reductive subgroup of $G$ and suppose that $N_G(K^\circ)$ and $C_G(K^\circ)$ are $k$-defined.
		If $H \le N_G(K^{\circ})$, then $H$ is relatively $G$-completely reducible over $k$ with respect to $K$  if and only if the induced action of $H$ on $X_k(K^{\circ})$ is completely reducible. \label{thm:gen-rel-rat}
	\end{enumerate}
\end{theorem}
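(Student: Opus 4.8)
The statement to prove is Theorem~\ref{thm:generalisation-rat}, the rational counterpart of parts (i) and (ii) of the main Theorem~\ref{thm:generalisation}. Since the proofs of parts (i) and (ii) in the algebraically closed case are given as Proposition~\ref{prop:G-cr-G0-cr} and Proposition~\ref{prop:rel-G-cr}, the natural plan is to check that each step of those proofs transplants to the $k$-defined setting, replacing R-parabolic and R-Levi subgroups throughout by their $k$-defined analogues and the building $X(G^\circ)$ by $X_k(G^\circ)$.

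\textbf{Part \ref{thm:nonconnected-rat}.} The plan is to mirror the proof of Proposition~\ref{prop:G-cr-G0-cr} verbatim, but working inside $X_k(G)$ and $X_k(G^\circ)$. For the backward direction, I would take a $\Gamma$-stable $k$-defined parabolic $P \in X_k(G^\circ)^\Gamma$, pass to its normaliser $N_G(P)$, and argue that this is a $\Gamma$-stable $k$-defined R-parabolic of $G$; here the essential input is that the normaliser of a $k$-defined subgroup is again $k$-defined, together with the rational version of \cite[Proposition~5.4(a)]{Martin03}. By hypothesis $N_G(P)$ then admits an opposite in $X_k(G)^\Gamma$, whose identity component furnishes a $\Gamma$-stable $k$-defined opposite to $P$ in $X_k(G^\circ)$. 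For the forward direction I would take a $\Gamma$-stable $k$-defined R-parabolic $P$ of $G$, note $P^\circ$ is again $\Gamma$-stable and $k$-defined, obtain a $\Gamma$-stable opposite $Q \in X_k(G^\circ)^\Gamma$, and then run the intersection argument of \cite[Proposition~6.16]{Bate2005} to show $N_G(Q)$ meets $P$ in a $\Gamma$-stable $k$-defined R-Levi subgroup. The one point requiring care is that every construction stays within the $k$-rational framework; this is where I would lean on the rational theory of \cite[\S 5]{Bate2005} and \cite[Definition 4.1]{Bate2011}.

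\textbf{Part \ref{thm:gen-rel-rat}.} Here I would follow the structure of Proposition~\ref{prop:rel-G-cr}. First reduce to the case $K = K^\circ$ by observing that relative $G$-complete reducibility over $k$ with respect to $K$ coincides with that with respect to $K^\circ$, now invoking part \ref{thm:nonconnected-rat} in place of Proposition~\ref{prop:G-cr-G0-cr}. Then set $N = N_G(K)$, $C = C_G(K)$ with quotient $\pi\colon N \to N/C$; the hypothesis that $N_G(K^\circ)$ and $C_G(K^\circ)$ are $k$-defined is exactly what guarantees $\pi$ is a $k$-defined morphism so that the intermediate groups and the induced cocharacters remain $k$-rational. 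I would then apply the rational analogue of \cite[Theorem 1]{Gruchot2019} (relative $G$-cr over $k$ with respect to $K$ is equivalent to $\pi(H)$ being $\pi(N)$-cr over $k$), combine with part \ref{thm:nonconnected-rat}, and finish with the rational form of Lemma~\ref{lem:equivariant}\ref{equivariant-ii}: the connected kernel of $K \to \pi(K)$ is the torus $Z(K)^\circ$, which is non-degenerate, so complete reducibility of the $H$-action transfers across $\pi$.

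\textbf{Main obstacle.} The genuinely delicate point, and the reason the extra $k$-defined hypotheses on $N_G(K^\circ)$ and $C_G(K^\circ)$ appear only in the rational statement, is ensuring that the cocharacter bookkeeping survives descent to $k$. Over an algebraically closed field one freely uses the decomposition $P_\lambda = R_u(P_\lambda) L_\lambda$ and the ability to adjust a cocharacter $\lambda$ by an element $u \in R_u(P_\lambda)$; in the rational setting one must verify that the relevant cocharacters lie in $Y_k$ and that the conjugating unipotent elements can be chosen $k$-rational. I expect the crux to be confirming that Lemma~\ref{lem:equivariant} and the auxiliary results of \cite[\S 6.2]{Bate2005} (in particular the splitting $G = MN$ and the cocharacter manipulations of \cite[Lemma 6.15]{Bate2005}) hold with $k$-structures intact; once these rational analogues are in place, the two arguments above go through essentially unchanged. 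I would therefore flag that the proof relies on the rational versions of these cited lemmas, and otherwise declare the argument parallel to the algebraically closed case, leaving the routine rational verifications to the reader.
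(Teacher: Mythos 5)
Part~\ref{thm:nonconnected-rat} of your argument coincides with the paper's: one reruns the proof of Proposition~\ref{prop:G-cr-G0-cr} with $k$-defined R-parabolic subgroups throughout, the only point needing comment being that $N_G(P)$ is again $k$-defined, which follows from the proof of \cite[Proposition 5.4(a)]{Martin03}. Your reduction of part~\ref{thm:gen-rel-rat} to the quotient $\pi\colon N_G(K)\to N_G(K)/C_G(K)$ and the rational version of \cite[Theorem 1]{Gruchot2019} (namely \cite[Theorem 7.5]{Gruchot2019}) also matches the paper.

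The gap is in the last step of part~\ref{thm:gen-rel-rat}. You propose to finish with ``the rational form of Lemma~\ref{lem:equivariant}\ref{equivariant-ii}'' and describe establishing the rational analogues of Lemma~\ref{lem:equivariant} and the auxiliary results of \cite[\S 6.2]{Bate2005} as routine verifications to be left to the reader. They are not routine: the rational counterpart of Lemma~\ref{lem:equivariant} is \emph{false} in general, see \cite[Example 3.10]{UCHIYAMA2016}, so no amount of care with $k$-rational cocharacters and $k$-points of unipotent radicals will produce it. The paper gets around this by exploiting the specific shape of the map at hand: after reducing to $K$ connected, $K\to K/Z(K)$ is a \emph{central} quotient, and for such a map the image and the preimage of a $k$-defined parabolic subgroup are $k$-defined by \cite[22.6 Theorem(i)]{Borel1991}, the image of a $k$-defined Levi subgroup is clearly $k$-defined, and the preimage of a $k$-defined Levi subgroup is $k$-defined by \cite[22.5 Corollary]{Borel1991}. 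These statements substitute for the general equivariant lemma and suffice to transfer complete reducibility across $\pi$ in both directions for this particular homomorphism. Your instinct that the cocharacter bookkeeping is the delicate point is correct, but the resolution requires centrality of the kernel, not merely non-degeneracy; as written, your argument rests on a lemma that does not hold.
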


For the proof of \ref{thm:nonconnected-rat}, follow the proof of Proposition \ref{prop:G-cr-G0-cr}, replacing the R-parabolic subgroups by $k$-defined R-parabolic subgroups. 
Note that $N_G(P)$ is also $k$-defined, by the proof of \cite[Proposition 5.4(a)]{Martin03}.
For \ref{thm:gen-rel-rat} replace Theorem \ref{thm:generalisation}\ref{thm:nonconnected} by part \ref{thm:nonconnected-rat} and \cite[Theorem 1]{Gruchot2019} by its rational version \cite[Theorem 7.5]{Gruchot2019}. The rational counterpart of Lemma \ref{lem:equivariant} does not hold in general, see \cite[Example 3.10]{UCHIYAMA2016}. However, the map $K\to K/Z(K)$ is central. Without loss, we can assume that $K$ is connected. Then the image resp.\ preimage of a $k$-defined parabolic subgroup is $k$-defined, by \cite[22.6 Theorem(i)]{Borel1991}.
Clearly, the image of a $k$-defined Levi subgroup is $k$-defined. 
The preimage of a $k$-defined Levi subgroup is $k$-defined,  by \cite[22.5 Corollary]{Borel1991}. This suffices to adapt the proof of Theorem \ref{thm:generalisation}\ref{thm:gen-rel} to the rational setting.

\subsection{Complete reducibility and the topology of fixed point subcomplexes of \texorpdfstring{$X(G^{\circ})$}{X(G0)}}\label{sec:top}
In our final section we consider 
the connection between the notion of 
$G$-complete reducibility and the 
geometric realization of the 
building of $G^\circ$ due to Serre \cite[Theorem~2]{Serre1997}. 

As before, let $X = X(G^{\circ})$.
Take a Levi subgroup $L$ of $G$ and set $s(L) := X^L$ denote the 
subcomplex of $X$ consisting of the parabolic subgroups of $G^{\circ}$
containing $L$.
For every parabolic subgroup $P$ in $s(L)$ there is a unique 
Levi subgroup $M$ of $P$ with $L\subseteq M$.
Moreover, the parabolic subgroup $P^-$  
such that $P\cap P^- = M$ is also contained in $s(L)$, so that
$P$ has an opposite in $s(L)$;
thus $s(L)$ is $X$-cr.
This argument also shows that each $P$ in $s(L)$ has a unique opposite
in $s(L)$, and this implies that the
geometric realization of $s(L)$ has the homotopy type of a 
single sphere (cf.\ Theorem \ref{thm:gcr-building}(v) below). 
Serre calls the subcomplexes $s(L)$ of $X$ \emph{Levi spheres}, 
\cite[\S 2]{Serre1997} or 
\cite[2.1.6, 3.1.7]{Serre2003-2004}.
The following is part of \cite[Theorem 2]{Serre1997} 
in our context and applies to each of the notions of 
complete reducibility discussed above, thanks to
Theorem~\ref{thm:generalisation}.

\begin{theorem}[{\cite[Theorem~2]{Serre1997}}]
	\label{thm:gcr-building}
	Let $G$ be reductive and defined over the field $k$, $X = X_{k}(G^{\circ})$ and let $\Gamma \le \Aut(X)$. 
	Then the following	are equivalent:
	\begin{itemize}
		\item[(i)] $\Gamma$ acts completely reducibly on $X$;
		\item[(ii)] $X^\Gamma$ is $X$-completely reducible;
		\item[(iii)] $X^\Gamma$ contains a Levi sphere of the same dimension as $X^\Gamma$; 
		\item[(iv)] $X^\Gamma$ is not contractible 
		(i.e.\ does not have the homotopy type of a point);
		\item[(v)] 
		$X^\Gamma$ has the homotopy type of a bouquet of spheres.
	\end{itemize}
\end{theorem}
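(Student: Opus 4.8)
The plan is to regard (i)\,$\Leftrightarrow$\,(ii) as a formality and to organise the topological conditions (iii)--(v) around a single dichotomy for convex subsets of a spherical building. Write $Y = X^{\Gamma}$, a convex subcomplex of $X$ (or of its barycentric subdivision), and put $d = \dim Y$. The equivalence (i)\,$\Leftrightarrow$\,(ii) is immediate from Definition~\ref{def:cr-action}, which declares $\Gamma$ to act completely reducibly precisely when $Y$ is $X$-cr. I would then establish (iv)\,$\Rightarrow$\,(ii) by a contraction argument, (ii)\,$\Rightarrow$\,(iii)\,$\Rightarrow$\,(iv) through a top-dimensional Levi sphere, and (ii)\,$\Rightarrow$\,(v)\,$\Rightarrow$\,(iv) through the homotopy type; these loops force all five conditions to coincide. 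The degenerate case $Y = \emptyset$ is absorbed by the convention $S^{-1} = \emptyset$, under which the empty set is simultaneously $X$-cr, non-contractible, and a sphere of its own dimension; so I assume $Y \neq \emptyset$ hereafter.

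I would first clear away the elementary implications. For (iii)\,$\Rightarrow$\,(iv): if $Y$ contains a Levi sphere $s(L)$ with $\dim s(L) = d$, then because $Y$ has no cells above dimension $d$, the group $H_d(Y)$ equals the group of $d$-cycles; the fundamental class of $s(L) \cong S^d$ is a nonzero $d$-cycle, so $H_d(Y) \neq 0$ and $Y$ is not contractible. For (v)\,$\Rightarrow$\,(iv): a non-empty wedge of $d$-spheres has non-trivial reduced homology. For (iv)\,$\Rightarrow$\,(ii) I argue contrapositively: if $Y$ fails to be $X$-cr there is a point $y_0 \in Y$ admitting no opposite in $Y$, so $d(y_0,y) < \pi$ for every $y \in Y$ in the CAT$(1)$ realisation of $X$; the geodesic $[y_0,y]$ is then unique and lies in $Y$ by convexity, and sliding along these geodesics deformation-retracts $Y$ onto $y_0$. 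Hence $Y$ is contractible, i.e.\ (iv) fails. This is the standard CAT$(1)$ argument and uses only convexity together with the characterisation of opposition as maximal distance $\pi$.

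The weight of the theorem falls on the two implications leaving (ii). For (ii)\,$\Rightarrow$\,(iii) I would take a top-dimensional simplex $P$ of $Y$ (a parabolic); since $Y$ is $X$-cr it contains an opposite $P^{-}$, and $L := P \cap P^{-}$ is a common R-Levi subgroup. The Levi sphere $s(L)$ contains $P$ as a $d$-simplex, so once one knows $s(L) \subseteq Y$ the chain $d \le \dim s(L) \le \dim Y = d$ forces $\dim s(L) = d$. The real task is thus to pull the whole of $s(L)$ into the convex $X$-cr set $Y$; here I would use that $s(L)$ is itself $X$-cr with unique opposites (as recorded above) and realise each of its parabolics on a geodesic between non-opposite points lying over $P$ and $P^{-}$, so that convexity of $Y$ captures it. For (ii)\,$\Rightarrow$\,(v) I would show that $Y$ is covered by its top-dimensional Levi spheres and then compute $|Y|$ up to homotopy. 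This is the main obstacle: the target is that $|Y|$ is a bouquet of $d$-spheres, with vanishing homology below dimension $d$ and free top homology. I would derive this from the Solomon--Tits theorem for $X$, together with the homotopy-sphere property of each Levi sphere, assembling $Y$ from these spheres by a Mayer--Vietoris/shelling argument in the spirit of Solomon--Tits. This homotopy computation is the genuine content of Serre's Theorem~2 and is where essentially all of the difficulty resides; the remaining implications are either formal or metric.
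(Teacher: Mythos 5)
The first thing to say is that the paper does not prove this statement at all: Theorem~\ref{thm:gcr-building} is quoted verbatim from \cite[Theorem~2]{Serre1997}, and the text around it only remarks that it now applies to the various notions of complete reducibility via Theorem~\ref{thm:generalisation}. So there is no in-paper argument to compare against; what you have written is an outline of Serre's own proof. Most of your outline is sound and follows the standard route: (i)\,$\Leftrightarrow$\,(ii) is indeed definitional; the CAT$(1)$ geodesic contraction for (iv)\,$\Rightarrow$\,(ii) is exactly Serre's argument (a point $y_0$ with no opposite in $Y$ is within distance $<\pi$ of every point of $Y$, and the unique geodesics to $y_0$ stay in $Y$ by convexity, giving a contraction); and for (ii)\,$\Rightarrow$\,(iii) the correct justification for $s(L)\subseteq Y$ is the fact, recorded in \cite[2.1.6, 3.1.7]{Serre2003-2004}, that the Levi sphere $s(L)$ is precisely the convex hull of the pair of opposite simplices $P$, $P^-$ --- which is what your geodesic description is gesturing at. Two minor points: (v)\,$\Rightarrow$\,(iv) needs the bouquet to be non-empty (an empty bouquet is a point), so the convention has to be fixed; and in (iii)\,$\Rightarrow$\,(iv) you should note that $Y$ is a genuine simplicial complex only after passing to the barycentric subdivision when $\Gamma$ is not type-preserving, though this costs nothing homologically.

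The genuine gap is (ii)\,$\Rightarrow$\,(v). You correctly identify this as where all the difficulty lives, but ``assemble $Y$ from its top-dimensional Levi spheres by a Mayer--Vietoris/shelling argument in the spirit of Solomon--Tits'' is a statement of intent, not an argument. The Solomon--Tits theorem concerns the full building, which is shellable via its chamber system; a convex fixed-point subcomplex $Y$ need not inherit any shelling order, its maximal simplices need not all be chambers of $X$, and the intersections of the covering Levi spheres are not controlled by anything you have set up. The assertion that a non-contractible convex subcomplex of a spherical building has the homotopy type of a bouquet of spheres of top dimension is a theorem in its own right (it is the substance of Serre's Th\'eor\`eme~2, with a detailed treatment in work of von Heydebreck on homotopy properties of convex subcomplexes of spherical buildings), and it requires an actual induction --- typically on the distance of a chamber of $Y$ from a fixed Levi sphere inside $Y$, collapsing $Y$ onto a union of apartment-like pieces --- none of which is present here. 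As written, your proof establishes the equivalence of (i)--(iv) but not the implication to (v).
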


\bigskip
Acknowledgements: We would like to thank Michael Bate for helpful discussions on the material of this note.
 
\providecommand{\bysame}{\leavevmode\hbox to3em{\hrulefill}\thinspace}
\providecommand{\MR}{\relax\ifhmode\unskip\space\fi MR }
\providecommand{\MRhref}[2]{%
	\href{http://www.ams.org/mathscinet-getitem?mr=#1}{#2}
}
\providecommand{\href}[2]{#2}

\end{document}